\documentclass[11pt]{amsart}
\usepackage{amssymb,amsmath,amsthm,amsfonts,mathrsfs,graphicx}
\usepackage{mathtools}
\mathtoolsset{showonlyrefs}
\usepackage{comment}
\usepackage{subcaption}
\usepackage[margin=3cm]{geometry}
\usepackage[colorlinks]{hyperref}
\hypersetup{
  linkcolor=red,  % equation/section refs (\ref, \eqref)
  citecolor=red,  % citations (\cite)
  urlcolor=red    % \url, \href
}
\usepackage{cite}

\usepackage{tikz}
\usepackage{ esint }
\title[Uniform-in-diffusivity mixing]{Uniform-in-Diffusivity Mixing by Shear flows:\\ Stochastic and Dynamical Perspectives}

\author{Kyle L. Liss}
\address[Kyle L. Liss]{\newline Department of Mathematics, \ 
 University of South Carolina, 1523 Greene St., Columbia, SC 29208, USA}
\email{kliss@mailbox.sc.edu}

\author{Kunhui Luan}
\address[Kunhui Luan]{\newline Department of Mathematics, \ 
 University of South Carolina, 1523 Greene St., Columbia, SC 29208, USA}
\email{kunhui.luan@sc.edu}

\newtheorem{theorem}{Theorem}[section]
\newtheorem{lemma}[theorem]{Lemma}

\newtheorem{assumption}{Assumption}
\theoremstyle{definition}

\theoremstyle{remark}
\newtheorem{remark}{Remark}[section]

\numberwithin{equation}{section}

\def\R{\mathbb{R}}
\def\T{\mathbb{T}}

\def\pa{\partial}

\usepackage{tikz-cd}
\usepackage{soul}

\newcommand{\Z}{\mathbb{Z}}
\newcommand{\E}{\mathbf{E}}

\def\R{\mathbb{R}}
\def\N{\mathbb{N}}
\def\P{\mathbf{P}}
\def\dee{\mathrm{d}}
\def\leqc{\lesssim}
\def\pa{\partial}

\def\m{\text{m}}
\def\grad{\nabla}

\def\d{\textnormal{d}}
\numberwithin{equation}{section}

\newcommand\numberthis{\addtocounter{equation}{1}\tag{\theequation}}

\begin{document}
%%%%%%%%%%%%%%%%
\allowdisplaybreaks

\begin{abstract}
We study passive scalar mixing by parallel shear flows in the presence of weak molecular diffusion. We recover the sharp uniform-in-diffusivity mixing rate for shear flows with finitely many critical points, recently proven in \cite{RajDallas}. Our approach is based on the stochastic representation formula of the associated advection-diffusion equation and yields two short proofs. The first uses a stochastic integration-by-parts argument and gives optimal mixing under the weakest regularity assumption required in the zero-diffusion case, answering Question II in \cite[Section 4]{RajDallas}. The second adopts a dynamical systems perspective and provides a proof of shear-induced mixing that, to our knowledge, is new even in the zero-diffusivity setting.
\end{abstract}

\maketitle %\centerline{\date}

%\tableofcontents

\section{Introduction}

We consider a passive scalar $f:[0,\infty) \times \T^2 \to \R$ advected by a parallel shear flow in the presence of weak molecular diffusion. The dynamics of $f$ are governed by the advection-diffusion equation
\begin{equation} \label{eq:ADE}
\begin{cases}
    \partial_t f + b(y) \partial_x f = \nu \Delta f, \\ 
    f|_{t=0} = f_0.
\end{cases}
\end{equation}
Here, $(x,y) \in \T^2 := \R^2 / (2 \pi\Z)^2$, $b:\T \to \R$ is a smooth shear profile, and $\nu \in [0,1]$ is the diffusivity. The $x$-average of the solution is unaffected by the transport term and evolves independently according to a one-dimensional heat equation. %Since this average decouples from the remaining dynamics, 
We may therefore assume without loss of generality that the initial datum $f_0: \T^2 \to \R$ satisfies the mean-zero condition 
\begin{equation} \label{eq:meanzero}
\int_\T f_0(x,y) \, \dee x = 0 \qquad \forall \, y \in \T,
\end{equation}
which is then preserved for all $t \ge 0$.

Our interest here is in the long-time behavior of solutions to \eqref{eq:ADE}, specifically how transport by the shear flow homogenizes the scalar toward its spatial average, a process known as \textit{mixing}. The basic features of shear-induced passive scalar mixing are well understood in the absence of diffusion; see e.g. \cite{BCZ17,CZCW20}. When $\nu = 0$ and $b'(y) \neq 0$ for a.e. $y \in \T$, it can be shown that
\begin{equation} \label{eq:weak}
    f(t,x,y) \rightharpoonup 0 \hspace{0.25cm} \text{weakly in}\, \, L^2(\T^2) \, \, \text{as} \, \, t \to \infty 
\end{equation}
for every $f_0 \in L^2$ satisfying \eqref{eq:meanzero}. Since $\|f(t)\|_{L^2}$ is conserved in this case, \eqref{eq:weak} reflects a transfer of energy to increasingly fine spatial scales, driven by the horizontal stretching of scalar level sets in regions where $b'(y)$ is nonzero. 

%Since the streamline average must be removed in \eqref{eq:weak}, shear flows are not mixing in the ergodic-theoretic sense, but instead exhibit what is commonly termed \textit{phase mixing}. Parallel shears provide the simplest examples of this phenomenon, although it occurs more generally in autonomous planar flows with differential rotation along closed streamlines \cite[Theorem 4.1]{ElgindiExpository}.

The weak convergence above can be made quantitative under additional assumptions that control how the shear degenerates near the zeros of $b'$. Specifically, if $b$ has finitely many critical points and $b'$ vanishes to order at most $N$ at each of them, then
\begin{equation} \label{eq:standardmixing}
    \|f(t)\|_{H^{-1}} \leqc \langle t \rangle^{-\frac{1}{N+1}}\|f_0\|_{H^1}.
\end{equation}
The standard proof of \eqref{eq:standardmixing} is based on the explicit mode-by-mode in $x$ Fourier representation of the $\nu = 0$ solution,
\begin{equation}\label{eq:explicit}
    f(t,x,y) = \sum_{k \in \mathbb{Z} \setminus \{0\}} e^{ikx} e^{-ikb(y)t} f_k(y), \quad f_k(y) = \int_\T e^{-ikx}f_0(x,y) \, \dee x,
\end{equation}
together with decay estimates for oscillatory integrals of the form
\begin{equation} \label{eq:oscillatoryintro}
\int_\T e^{-i k b(y) t} g(y)h(y) \, \dee y, \quad g, h \in H^1(\T) 
\end{equation}
obtained via integration by parts. The decay rate above is known to be optimal in the $\nu = 0$ setting. A proof of \eqref{eq:standardmixing} can be found in \cite[Appendix A]{BCZ17}. For additional discussion, see \cite[Section 1]{RajDallas} and Section~\ref{sec:ideas} below.

In real systems, the scalar is always subject to molecular diffusion, albeit possibly weak. It is therefore natural to ask whether the mixing estimate \eqref{eq:standardmixing} persists when $\nu  > 0$. Diffusion can limit mixing in the sense that the characteristic length scale \begin{equation} \label{eq:lengthscale}
\ell(t) = \frac{\|f(t)\|_{L^2}}{\|\grad f(t)\|_{L^2}}
\end{equation}
is no longer expected, in general, to decay to zero, as in the $\nu = 0$ case \cite{MilesDoering, SamMartinTomasso}. However, decay of negative Sobolev norms captures not only reduction in length scale but also decay of scalar intensity, which suggests that estimates of the form \eqref{eq:standardmixing} may remain stable in the presence of diffusion (see also \cite[Remark 1.6]{CZ20}). At the same time, uniform-in-diffusivity estimates are generally considerably more subtle to establish than their $\nu = 0$ counterparts.

For shear flows, the first uniform-in-diffusivity mixing result since the foundational work of Kelvin on the Couette flow \cite{Kelvin} was obtained in \cite{CZ20} for a class of strictly monotone shears on $\T \times \R$. The proof combines a hypocoercivity approach \cite{Villani} with the identification of a ``good'' derivative that commutes with the transport part of \eqref{eq:ADE}. This strategy was later adapted in \cite{coti2023orientation, zelati2025nonlinear} to obtain uniform-in-diffusivity mixing, up to a relevant dissipation timescale, for kinetic transport on $\mathbb{S}^2$, where the induced shear structure exhibits degeneracies. Sharp uniform-in-diffusivity mixing for general parallel shears with finitely many critical points remained open until the recent work \cite{RajDallas}. There, the bound \eqref{eq:standardmixing} was proven for $\nu > 0$ using a resolvent representation of the solution that extends the explicit formula \eqref{eq:explicit}. The argument relies on quantitative pointwise estimates on the resolvent kernel, enabling control of the relevant oscillatory integrals with integration by parts. In the case of nondegenerate critical points, they also establish the nonzero limiting behavior of $\ell(t)$ predicted in the physics literature \cite{camassa2010analysis}.

Uniform-in-diffusivity mixing and related phenomena have also been studied beyond the passive scalar shear flow setting. For instance, vorticity mixing in the two-dimensional Euler equations, known as \textit{inviscid damping}, has been established uniformly in viscosity for the Navier-Stokes equations linearized around certain classes of spectrally stable shear flows \cite{chen2023linear, jia2023uniform}; see also \cite{beekie2026uniform}, which proves uniformity of a related damping effect known as \textit{vorticity depletion}. There are also several examples of uniform-in-diffusivity exponential mixing of passive scalars by time-dependent flows \cite{BBPSenhanced, IyerCooperman24, navarro2025exponential}.

In this paper, we recover the sharp uniform-in-diffusivity mixing result of \cite{RajDallas} for shears with finitely many critical points using the stochastic representation formula associated with \eqref{eq:ADE}. We provide two proofs, each exploiting the stochastic framework in a distinct way. The first proceeds by analyzing the stochastic analogue of \eqref{eq:explicit} and implementing an integration-by-parts argument. It reproduces the strongest possible mixing estimates available in the $\nu = 0$ setting and, in particular, answers Question II in \cite[Section 4]{RajDallas}. The second proof is more dynamical in nature and conceptually distinct from the mode-by-mode oscillatory integral analysis standard for the $\nu = 0$ problem.

\subsection{Statement of results} \label{sec:results}

Our standing assumption on the shear profile is as follows.

\begin{assumption} \label{assumption}
The function $b:\T \to \R$ is smooth and has finitely many critical points $y_1, \ldots, y_m$. Moreover, there exists $N \in \N$ such that $b'$ vanishes to order at most $N$ at each $y_i$. That is, for each $1 \le i \le m$, there exists $k \le N$ with $b^{(k+1)}(y_i) \neq 0$.
\end{assumption}

To state our first result, we introduce a norm that measures the $W^{1,1}(\T)$ regularity in $y$ of the Fourier coefficients with respect to $x$. For $g:\T^2 \to \R$, define 
$$g_k(y) = \int_\T e^{-i k x} g(x,y) \, \dee x.$$
We write $\ell_k^2(W^{1,1}_y)$ for the Banach space equipped with the norm
$$ \|g\|_{\ell_k^2 (W^{1,1}_y)} = \left(\sum_{k \in \Z}\|g_k\|^2_{W^{1,1}_y(\T)}\right)^{1/2}. $$
The continuous dual is naturally identified with $\ell_k^2 (W^{-1,\infty}_y)$. We then have the following uniform-in-diffusivity $\ell_k^2 (W^{1,1}_y) \to \ell_k^2 (W^{-1,\infty}_y)$ mixing estimate.

\begin{theorem} \label{thm:main}
Let $b: \T \to \R$ satisfy Assumption~\ref{assumption}. There exists $C > 0$, depending only on $b$, such that for all $\nu > 0$ and $f_0 \in \ell^2_k(W_y^{1,1})$ satisfying \eqref{eq:meanzero}, the associated smooth solution of \eqref{eq:ADE} satisfies
    \begin{equation} \label{eq:main}
        \|f(t)\|_{\ell_k^2 (W_{y}^{-1,\infty})} \le C \langle t \rangle^{-\frac{1}{N+1}}\|f_0\|_{\ell_k^2 (W_y^{1,1})}.
    \end{equation}
\end{theorem}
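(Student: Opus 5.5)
Since the $x$-Fourier modes decouple, I will work one mode at a time. For each $k\neq 0$ the coefficient $f_k(t,y)$ solves
\[
\pa_t f_k + ikb(y)f_k = \nu\pa_y^2 f_k - \nu k^2 f_k .
\]
The factor $e^{-\nu k^2 t}$ has modulus at most $1$, so it can be dropped. By duality and the $\ell^2_k$ structure, the theorem reduces to a single-mode bound. For $g\in W^{1,1}(\T)$ and $h\in W^{1,1}(\T)$ I must show
\[
\Big|\int_\T u_k(t,y)\,h(y)\,\dee y\Big|\le C\langle t\rangle^{-\frac1{N+1}}\|g\|_{W^{1,1}}\|h\|_{W^{1,1}},
\]
uniformly in $k\neq0$ and $\nu>0$, where $u_k$ solves $\pa_tu=\nu\pa_y^2u-ikbu$ with $u(0)=g$. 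I will use the Feynman--Kac representation
\[
u_k(t,y)=\E\Big[g(Y_t^y)\exp\Big(-ik\int_0^t b(Y_s^y)\,\dee s\Big)\Big],\qquad Y_s^y=y+\sqrt{2\nu}W_s .
\]
This is the stochastic analogue of \eqref{eq:explicit}.

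\textbf{Localization in $y$.} Choose a smooth partition of unity on $\T$: cutoffs $\chi_i$ supported in $\delta$-neighborhoods of the critical points $y_i$, and cutoffs on the complement, where $|b'|\ge c_\delta$. The quantity to bound is
\[
\int h(y)\,\E\big[g(y+\sqrt{2\nu}W_t)\,e^{-ik\Phi_t(y,W)}\big]\,\dee y,\qquad \Phi_t(y,W)=\int_0^t b(y+\sqrt{2\nu}W_s)\,\dee s .
\]
The key structural point is that $\pa_y\Phi_t(y,W)=\int_0^t b'(y+\sqrt{2\nu}W_s)\,\dee s$. The Brownian path is independent of $y$, so for each fixed path I can integrate by parts in $y$, pathwise. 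Writing $e^{-ik\Phi}=\frac{\pa_y e^{-ik\Phi}}{-ik\,\pa_y\Phi}$ and moving the derivative onto $h(y)g(y+\sqrt{2\nu}W_t)/\pa_y\Phi$ costs only $W^{1,1}$ norms of $g$ and $h$. The derivative of $g$ sits at the shifted point, but translation preserves $W^{1,1}$; this is why $W^{1,1}$ suffices, as in the $\nu=0$ case. It also produces a term involving $\pa_y^2\Phi/(\pa_y\Phi)^2$.

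\textbf{Main obstacle.} The difficulty is a lower bound on $|\pa_y\Phi_t(y,W)|$, and a bound on the variation of $1/\pa_y\Phi$ in $y$, that are uniform in the path. In the deterministic case $\pa_y\Phi=tb'(y)$. Now it is a time average of $b'$ along a Brownian path started at $y$. The path can wander near a critical point, or across regions where $b'$ changes sign, so $\pa_y\Phi$ may be small or vanish. I plan to handle this as follows.

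\begin{enumerate}
\item Split $y$ into a set where $|\int_0^t b'(y+\sqrt{2\nu}W_s)\,\dee s|\ge t\lambda$ and its complement, with $\lambda=(|k|t)^{-N/(N+1)}$. Off the bad set, the sign and size are controlled. For $y$ near $y_i$ the average is comparable to $t|y-y_i|^N$ whenever the path stays in a $|y-y_i|$-neighborhood of $y$.
\item Estimate the bad set, whose $y$-measure should be $\lesssim (|k|t)^{-1/(N+1)}$, crudely by $\|g\|_{L^\infty}\|h\|_{L^\infty}\lesssim\|g\|_{W^{1,1}}\|h\|_{W^{1,1}}$ times its measure.
\item For each fixed path, $y\mapsto\pa_y\Phi_t$ is a smooth function whose $N$-th derivative in $y$ equals a time average of $b^{(N+1)}$. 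Near each $y_i$ this is bounded below in a fixed neighborhood, because Assumption~\ref{assumption} holds with a uniform order; if the path exits the neighborhood, use the corresponding higher derivative on that portion. This is the claim I am least sure of. A van der Corput-type sublevel set estimate then gives $|\{y:|\pa_y\Phi|\le t\lambda\}|\lesssim\lambda^{1/N}$, and the number of monotonicity intervals of $1/\pa_y\Phi$ is controlled. That control bounds $\int|\pa_y(1/\pa_y\Phi)|$ by $(t\lambda)^{-1}$ times a constant, avoiding any second-derivative integrals.
\end{enumerate}

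\textbf{Conclusion.} Optimizing $\lambda$ gives a bound $\lesssim (|k|t)^{-1/(N+1)}\le t^{-1/(N+1)}$ for $|k|\ge1$. Taking expectation over paths and summing in $\ell^2_k$ via Cauchy--Schwarz then yields \eqref{eq:main}. For $t\lesssim1$ the bound is trivial from the $L^\infty\to$ duality estimate, since Feynman--Kac gives $|u_k|\le\E|g(Y_t)|$.
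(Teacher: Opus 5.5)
\textbf{There is a genuine gap, and it sits exactly at the step you flagged.} Your reduction to single modes, the Feynman--Kac formula, and the pathwise integration by parts in $y$ are all fine. The problem is that no control of $S_t(y)=\pa_y\Phi_t(y,W)=\int_0^t b'(y+\sqrt{2\nu}W_s)\,\dee s$ can be uniform in the path. ``Using a higher derivative on the portion where the path exits'' is not a valid move. A van der Corput sublevel-set estimate needs one fixed derivative of the whole function $y\mapsto S_t(y)$ bounded below on a $y$-interval, and contributions from different time portions can cancel.

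Worse, at long times this is not a rare-path issue. For $t\gg\nu^{-1}$ the process $y+\sqrt{2\nu}W_s$ equidistributes on $\T$. Since $\int_\T b^{(j)}=0$ for $j\ge1$, the averages $\frac1t\int_0^t b^{(j)}(y+\sqrt{2\nu}W_s)\,\dee s$ are $o(1)$ for typical paths, so the lower bound $\pa_y^{N_i}S_t\gtrsim t$ fails for most paths. Taking expectations therefore cannot rescue a pathwise argument valid for all times. Your proposal contains no mechanism for the long-time regime.

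\textbf{The paper closes this with two ingredients you are missing.}

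First, the pathwise argument is used only for $1\le t\le t_\nu=\nu^{-p}$ with $\frac{N+1}{N+3}<p<1$, and only on the event $\Omega_\delta=\{\sup_{s\le t_\nu}|\sqrt{2\nu}W_s|\le\delta\}$. By the reflection principle, $\P(\Omega\setminus\Omega_\delta)\lesssim e^{-c\delta^2\nu^{p-1}}\le t^{-1/(N+1)}$ on $[1,t_\nu]$. The bad event is then paid for crudely, e.g.\ by $\P(\Omega\setminus\Omega_\delta)\|g\|_{L^\infty}\|h\|_{L^1}$ per mode.

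On $\Omega_\delta$ every path stays within $\delta$ of its start, so the Taylor expansion of $b'$ near $y_i$ is valid for all $s\le t$. There your van der Corput route does work, provided you use the local order $N_i$ rather than $N$. The bound $\pa_y^{N_i}S_t=\int_0^t b^{(N_i+1)}(y+\sqrt{2\nu}W_s)\,\dee s\gtrsim t$ on $B_{2\delta}(y_i)$ gives:
\begin{itemize}
\item a sublevel set of measure $\lesssim\lambda^{1/N_i}\le(|k|t)^{-1/(N+1)}$;
\item at most $N_i-1$ sign changes of $S_t'$, hence the bounded monotonicity count you need.
\end{itemize}
This is a reasonable alternative to the paper's parity-based case analysis in Lemma~\ref{key_lemma} and Lemma~\ref{key_lemma_2}.

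Second, for $t\ge t_\nu$ the paper abandons the pathwise picture and uses enhanced dissipation, $\|f(t)\|_{L^2}\lesssim e^{-\nu^{\frac{N+1}{N+3}}t}\|f_0\|_{L^2}$. The choice $p>\frac{N+1}{N+3}$ makes this smaller than $t^{-1}$ on $[t_\nu,\infty)$, and the embedding $\ell^2_kW^{1,1}_y\hookrightarrow L^2$ finishes. Without the localization event with its probability bound and a separate long-time decay mechanism, your argument does not close.
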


\begin{remark}
   Since $H^1(\T) \hookrightarrow W^{1,1}(\T)$ and $\ell_k^2 H^1_y = L^2_x H^1_y$, we have the continuous embeddings
   $$ L^2_x H^1_y \hookrightarrow \ell_k^2 W_y^{1,1} \quad \text{and} \quad \ell_k^2 W_y^{-1,\infty} \hookrightarrow L^2_x H^{-1}_y. $$
   Therefore, \eqref{eq:main} implies the uniform-in-diffusivity $L^2_x H^1_y \to L^2_x H^{-1}_y$ mixing estimate obtained in \cite[Theorem 1.2]{RajDallas}. Our use of $W^{1,1}_y$ matches the weakest regularity required to yield $\langle t \rangle^{-1/(N+1)}$ decay in the $\nu = 0$ setting and answers Question II in \cite[Section 4]{RajDallas}.     
\end{remark}

Theorem~\ref{thm:main} above, as well as the classical $\nu = 0$ estimates obtained via integration by parts, measure mixing through norms that average negative regularity in the $y$ variable across the shear direction $x$, typically in an $L_x^2$ sense. Our second, more dynamical, proof of uniform-in-$\nu$ mixing naturally yields decay in a genuinely different topology, in which negative $y$-regularity is controlled uniformly in $x$. Precisely, we obtain the following estimate.

\begin{theorem} \label{thm:main2}
Let $b:\T \to \R$ satisfy Assumption~\ref{assumption}. There exists $C > 0$, depending only on $b$, such that all $\nu > 0$ and $f_0 \in L^\infty_x (W^{1,\infty}_y)$ there holds 
    \begin{equation}
        \|f(t)\|_{L^\infty_x (W_y^{-1,1})} \le C \langle t \rangle^{-\frac{1}{N+1}}\|f_0\|_{L^\infty_x (W^{1,\infty}_y)}.
    \end{equation}
\end{theorem}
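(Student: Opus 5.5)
The plan is to work entirely with the stochastic representation of \eqref{eq:ADE}. For $(x,y)\in\T^2$, let $W,W'$ be independent Brownian motions and set $Y_s = y+\sqrt{2\nu}\,W'_s$, $X_t = x+\int_0^t b(Y_s)\,\dee s + \sqrt{2\nu}\,W_t$, so that $f(t,x,y)=\E f_0(X_t,Y_t)$.

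\textbf{Reduction.} By duality, $\|f(t)\|_{L^\infty_x W^{-1,1}_y}$ is controlled by
\[
\sup_x \, \sup_{\|g\|_{W^{1,\infty}}\le 1} \Big| \int_\T g(y) f(t,x,y)\,\dee y \Big| .
\]
Two structural facts simplify this expression.
\begin{itemize}
\item The $x$-noise $\sqrt{2\nu}W_t$ only translates in $x$. Since we take a supremum in $x$ (and $f_0$ is bounded uniformly in $x$), it can be absorbed.
\item The $y$-flow is a rigid translation, $y\mapsto y+w_s$ with $w_s=\sqrt{2\nu}W'_s$.
\end{itemize}
Hence, for each frozen noise path $w$, it suffices to bound the deterministic integral
\[
I_w=\int_\T g(y)\, f_0\big(x+A_w(y),\,y+w_t\big)\,\dee y, \qquad A_w(y)=\int_0^t b(y+w_s)\,\dee s,
\]
uniformly in $x$ and in a set of paths of large probability.

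By \eqref{eq:meanzero}, write $f_0=\pa_x F$ with $\|F\|_{L^\infty_xW^{1,\infty}_y}\lesssim \|f_0\|_{L^\infty_xW^{1,\infty}_y}$. The chain rule gives
\[
f_0(x+A_w(y),y+w_t) = \frac{1}{A_w'(y)}\Big(\tfrac{\dee}{\dee y}\big[F(x+A_w(y),y+w_t)\big] - (\pa_yF)(x+A_w(y),y+w_t)\Big).
\]

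\textbf{Good paths.} Fix $\delta = \langle t\rangle^{-1/(N+1)}$. Split $\T$ into $\delta$-neighborhoods of the critical points $y_i$, whose contribution to $I_w$ is trivially $\lesssim\delta$, and their complement $G_\delta$, where Assumption~\ref{assumption} gives $|b'|\gtrsim\delta^N$ and $|b''|\lesssim \delta^{N-1}$ near each $y_i$.

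On the event $\{\sup_{s\le t}|w_s|\le c\delta\}$, we have $A_w'(y)=\int_0^t b'(y+w_s)\,\dee s$, which has a fixed sign and satisfies $|A_w'|\gtrsim t\delta^N$ on $G_\delta$, while $|A_w''|\lesssim t\delta^{N-1}$. On $G_\delta$ we then integrate the total-derivative term by parts, moving $\frac{\dee}{\dee y}$ onto $g/A_w'$. The boundary terms cost $1/(t\delta^N)$. The term $g'/A_w'$ costs $1/(t\delta^N)$. The term $g A_w''/(A_w')^2$, integrated over dyadic shells $|y-y_i|\sim r\ge\delta$, costs $\int_\delta r^{N-1}/(t r^{2N})\,\dee r\lesssim 1/(t\delta^N)$. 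The $\pa_yF/A_w'$ term is bounded the same way.

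With the choice of $\delta$ we get $|I_w|\lesssim \delta + (t\delta^N)^{-1}\lesssim \langle t\rangle^{-1/(N+1)}$. The complementary event has probability $\lesssim e^{-c\delta^2/(\nu t)}$, so the whole argument closes in the regime $\nu t\ll \delta^2$, i.e. $\nu\lesssim \langle t\rangle^{-(N+3)/(N+1)}$.

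\textbf{Main obstacle: $\nu t\gtrsim\delta^2$.} When the path wanders, $A_w'$ is $t$ times an average of translates of $b'$ over the occupation measure of $w$. Since $b'$ changes sign, this average can cancel, so no pathwise lower bound is available. My first attempt is to exploit the fact that this regime is precisely past the enhanced-dissipation time scale $\nu^{-1/(N+3)}\lesssim t^{(N+1)/(N+3)}\ll t$. I would use the Markov property to restart on the final window $[t-\tau,t]$ with $\tau\sim\delta^2/\nu$, on which the path stays within $c\delta$ with high probability. For $\tau<t$, conditionally on $\mathcal F_{t-\tau}$, I would run the good-path argument with $A_w$ replaced by the window integral $\int_{t-\tau}^t b'$, treating $f(t-\tau)$ as the new datum.

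The difficulty is that $\|f(t-\tau)\|_{L^\infty_xW^{1,\infty}_y}$ may grow like $\tau$-independent powers of $t$. To avoid this, I would not differentiate $f(t-\tau)$ at all. Instead I would perform the integration by parts in the window increment of $W'$ (a Bismut/Malliavin-type integration by parts in the Gaussian variable), using only $\|f(t-\tau)\|_{L^\infty}\le\|f_0\|_{L^\infty}$ plus the mean-zero structure. I expect balancing this stochastic integration by parts against the $\delta$-neighborhood losses to be the delicate step.
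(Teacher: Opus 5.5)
\textbf{There is a genuine gap: the large-time regime is not handled.} Your good-path step is correct, but it only reaches times at which the path stays within $c\,t^{-1/(N+1)}$ of its start with probability $1-O(t^{-1/(N+1)})$. That means $t\lesssim(\nu\log(1/\nu))^{-(N+1)/(N+3)}$.

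Beyond this, nothing is proved. The Markov-restart/Malliavin step is only a plan, and as stated it cannot close. In your framework, a final window of length $\tau\lesssim\delta^2/\nu$ gives at best
\[
\delta+(\tau\delta^N)^{-1}\ge\delta+\nu\delta^{-(N+2)}\gtrsim\nu^{1/(N+3)},
\]
and this exceeds $t^{-1/(N+1)}$ throughout $t\gtrsim\nu^{-(N+1)/(N+3)}$. So using only $\|f(t-\tau)\|_{L^\infty}\le\|f_0\|_{L^\infty}$ is insufficient, even if the integration by parts never touches $\partial_y f(t-\tau)$.

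Quoting an $L^\infty$ enhanced dissipation bound $\|f(t)\|_{L^\infty}\lesssim e^{-c\nu^{(N+1)/(N+3)}t}\|f_0\|_{L^\infty}$ does not fill the hole either. It yields $t^{-1/(N+1)}$ only for $t\gtrsim\nu^{-(N+1)/(N+3)}\log(1/\nu)$, leaving a band of times whose endpoints differ by powers of $\log(1/\nu)$.

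The root cause is your choice of confinement scale. The lower bound $|A_w'|\gtrsim t\delta^N$ and the shell bound on $A_w''$ need the path confined at the \emph{same} scale $\delta=t^{-1/(N+1)}$ as the excised neighborhoods, so that no trajectory crosses a critical point. That confinement is too costly near the enhanced dissipation time.

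\textbf{The missing idea: confine at a fixed scale.} The paper confines at a \emph{fixed} scale $\delta$ depending only on $b$. This suffices because $\{|S_t|\le ct^{1/(N+1)}\}$ is still contained in $m$ random intervals of total length $\lesssim t^{-1/(N+1)}$, even when trajectories wander across $y_i$ (Lemma~\ref{key_lemma}).
\begin{itemize}
\item If $b'$ vanishes to even order $N_i$ at $y_i$, H\"older gives $S_t(y)\gtrsim t\,|y-\bar y|^{N_i}$ with random center $\bar y=y_i-t^{-1}\int_0^t w_s\,\mathrm{d}s$.
\item If $N_i$ is odd, $b''\gtrsim(y-y_i)^{N_i-1}$ gives $S_t(y)-S_t(\bar y)\gtrsim t\,(y-\bar y)^{N_i}$ for $y>\bar y$. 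Hence the sublevel set near $y_i$ has diameter $\lesssim t^{-1/(N_i+1)}$.
\item Your shell estimate for $A_w''/(A_w')^2$ is replaced by piecewise monotonicity. $S_t'$ has at most one zero near each $y_i$, so $1/S_t$ is monotone on $O(1)$ pieces of the good set. The fundamental theorem of calculus then gives $\int_J|(1/S_t)'|\lesssim t^{-1/(N+1)}$ (Lemma~\ref{key_lemma_2}).
\end{itemize}

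\textbf{Why the time regimes then overlap.} With $\delta$ fixed, confinement up to $t_\nu=\nu^{-p}$ fails with probability $\lesssim e^{-\delta^2/(8\nu^{1-p})}$ for any $p<1$. Taking $p>(N+1)/(N+3)$ makes the enhanced dissipation bound at $t\ge t_\nu$ super-polynomially small. The two regimes therefore overlap by a power of $\nu$, and no intermediate-time analysis or Malliavin calculus is needed.

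\textbf{Your short-time argument is otherwise sound.} With these lemmas in hand, your integration by parts via $f_0=\partial_xF$ works, with $1/A_w'$ replaced by $1/S_t$ off the random intervals. It is a valid and shorter substitute for the paper's decomposition of $\Phi_t(I)$ into nearly horizontal curves, and it costs only $\|f_0\|_{L^\infty_xW^{1,\infty}_y}$.
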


\begin{remark}
    The integration by parts based argument used to prove Theorem~\ref{thm:main} can also give decay of $\|f(t)\|_{L^\infty_x (W_y^{-1,1})}$, but at the cost of paying more than one derivative on the initial data, which is not optimal.
\end{remark}

\subsection{Discussion of the proof} \label{sec:ideas}

The starting point for the proofs of both Theorem~\ref{thm:main} and~\ref{thm:main2} is to write the solution of \eqref{eq:ADE} using its stochastic representation formula. Let $W_t$ and $B_t$ be independent standard Brownian motions on $\R$, defined on a common probability space $(\Omega, \mathcal{F}, \P)$. We identify $\T = \R / 2\pi \Z$ with the fundamental domain $[0,2\pi)$ and with a slight abuse of notation write $b:\R \to \R$ for the $2\pi$-periodic extension of the original shear profile. Define the random flow map $\Phi_t(x,y) = \Pi(x_t,y_t)$, where $\Pi:\R^2 \to \T^2$ denotes the canonical projection and $(x_t,y_t) \in \R^2$ solves the stochastic differential equation
\begin{equation} \label{eq:SDE}
    \begin{cases}
        \dee x_t = -b(y_t) \, \dee t + \sqrt{2\nu} \, \dee B_t \\ 
        \dee y_t = \sqrt{2 \nu} \, \dee W_t
    \end{cases}
\end{equation}
with initial condition $(x,y) \in \T^2$. Then,
\begin{equation} \label{eq:flowmap}
    \Phi_t(x,y) = \Pi\left(x + \sqrt{2 \nu}B_t - \int_0^t b(y+\sqrt{2\nu}W_s) \, \dee s, \,
        y + \sqrt{2\nu}W_t\right)
\end{equation}
and the solution of \eqref{eq:ADE} is given by 
\begin{equation} \label{eq:stochasticrep}
    f(t,x,y) = \E [f_0 \circ \Phi_t(x,y)],
\end{equation}
where $\E$ denotes expectation with respect to $\P$.

Our proof of Theorem~\ref{thm:main} proceeds by a stochastic version of the integration-by-parts argument in \cite[Appendix A]{BCZ17}. As we show in Section~\ref{sec:IBP}, \eqref{eq:flowmap} and \eqref{eq:stochasticrep} together with standard computations in the $\nu = 0$ setting reduce the proof of Theorem~\ref{thm:main} to the estimate
\begin{equation}
    \E\left|\int_\T e^{-ik \varphi_t(y)} g(y) h(y) \, \dee y\right| \leqc \langle t \rangle^{-\frac{1}{N+1}}\|g\|_{W^{1,1}(\T)}\|h\|_{W^{1,1}(\T)}
\end{equation}
for $k \in \Z \setminus \{0\}$, where $\varphi_t(y)$ is now the random phase function
\begin{equation} \label{eq:randomphase}
\varphi_t(y) = \int_0^t b(y + \sqrt{2\nu}W_s)\, \dee s. 
\end{equation}
One should interpret $\varphi_t$ as the stochastic analogue of the phase $t b(y)$ in \eqref{eq:oscillatoryintro}. 

In the deterministic setting, one obtains \eqref{eq:standardmixing} by decomposing the integral in \eqref{eq:oscillatoryintro} as 
\begin{equation} \label{eq:standarddecomp}
\begin{aligned}
    \int_\T e^{-iktb(y)} g(y) h(y) \, \dee y & = \sum_{i=1}^m \int_{|y-y_i|\le \delta(t)} e^{-ik t b(y)} g(y) h(y) \, \dee y \\ 
    & \quad + \int_{\mathrm{complement}} \frac{1}{i k t b'(y)} \partial_y e^{-ik t b(y)} g(y) h(y) \, \dee y,
\end{aligned}
\end{equation}
where $\{y_i\}_{i=1}^m$ denote the zeros of $b'$ and $\delta(t)$ is a time-dependent length scale. On the complement, $|t b'(y)| \gtrsim t \delta(t)^N$. The decay rate $\langle t \rangle^{-1/(N+1)}$ then arises after choosing $\delta(t) = t^{-1/(N+1)}$ to optimize the gain from integration by parts with the $\mathcal{O}(\delta(t))$ contribution of the degenerate region.

It is not immediately clear that the same argument can be carried out with the stochastic phase $\varphi_t(y)$. Unlike the deterministic quantity $tb'(y)$, the derivative 
\begin{equation}
    \varphi_t'(y) = \int_0^t b'(y+\sqrt{2\nu} W_s) \, \dee s
\end{equation}
samples $b'$ along an entire random trajectory. In particular, points initially near a critical point may drift through regions where $b'$ changes sign or varies significantly in magnitude, and the resulting cancellations in the time integral obscure the behavior of $\varphi_t'(y)$. The key observation behind our proof is that, except on rare events that can be treated as an error, a decomposition analogous to \eqref{eq:standarddecomp} can still be implemented. Specifically, we show in Lemma~\ref{key_lemma} that for times $t \ll \nu^{-1}$, one can 
partition $\T$ into a collection of small random intervals, with total measure compatible with the optimal choice $\delta(t)$ above, and a complementary region on which \begin{equation} \label{eq:lbd}
|\varphi_t'(y)| \gtrsim t \delta(t)^N = t^{\frac{1}{N+1}},
\end{equation} 
in direct analogy with the lower bound for $t|b'(y)|$ in the deterministic setting. This allows us to recover the optimal $\nu = 0$ decay rate and regularity in Theorem~\ref{thm:main} with a stochastic version of \eqref{eq:standarddecomp}.

Theorem~\ref{thm:main2} relies on the same key stochastic Lemma~\ref{key_lemma}, but exploits it in a conceptually different way. Rather than proceeding mode-by-mode, the proof is based on understanding the geometry of images of vertical segments under the random flow map. The basic idea of the argument is as follows. Fix $x \in \T$ and consider the vertical segment $I = \{(x,y) \in \T^2: y \in \T\}$. By \eqref{eq:flowmap} and a change of variables, for any test function $g: \T^2 \to \R$ we may write
\begin{equation} \label{eq:introdyamics} \E \int_\T f_0 \circ \Phi_t(x,y) g(x,y) \, \dee y = \E\int_{\Phi_t(I)} J_t(y) f_0(x,y) (g \circ \Phi_t^{-1})(x,y) \, \dee \m_s, 
\end{equation}
where $J_t$ is the associated Jacobian factor and $\m_s$ denotes the arclength measure along the random curve $\Phi_t(I)$. When parametrized by the initial vertical coordinate $y$, the tangent vector to $\Phi_t(I)$ is $(-\varphi_t'(y),1)$, so that the slope of $\Phi_t(I)$ is locally determined by $1/|\varphi_t'(y)|$. Consequently, from the lower bound \eqref{eq:lbd} provided by Lemma~\ref{key_lemma}, the image $\Phi_t(I)$ is nearly horizontal with slope bounded by $t^{-1/(N+1)}$, except on a small random subset analogous to the intervals removed in \eqref{eq:standarddecomp}. Along any nearly horizontal piece $\gamma \subseteq \Phi_t(I)$ that crosses $\T^2$ once horizontally, the mean-zero condition \eqref{eq:meanzero} yields 
\begin{equation} \label{eq:gammamean}
    \left|\int_\gamma f_0(x,y) \, \dee m_s\right| \leqc \sup_{(x,y) \in \gamma}|\varphi_t'(y)|^{-1} \|\partial_y f_0\|_{L^\infty} \leqc \langle t \rangle^{-\frac{1}{N+1}} \|\partial_y f_0\|_{L^\infty}.
\end{equation}
It can be shown that the factors $J_t$ and $g \circ \Phi_t^{-1}$ vary slowly along $\gamma$ and may be treated as constants, up to controlled errors. We obtain decay of \eqref{eq:introdyamics} by decomposing $\Phi_t(I)$ into a family $\{\gamma_i\}$ of such curves and applying \eqref{eq:gammamean}. 

 The proof of Theorem~\ref{thm:main2} is perhaps more interesting than the result itself. It provides an argument for shear-induced mixing that we believe is new, even in the $\nu = 0$ case. Although the integration-by-parts proof of uniform mixing is somewhat simpler in the present setting, the dynamical argument is based on a more general principle, namely that phase mixing arises from the stretching of curves under the flow map. It may be an interesting direction of future research to investigate uniform-in-diffusivity mixing by more general two-dimensional autonomous flows from this perspective.

  The control of $\varphi_t'(y)$ given by Lemma~\ref{key_lemma} holds only on a time interval $t \in [0,t_\nu]$, where $t_\nu \ll \nu^{-1}$. For times $t \gtrsim t_\nu$, we rely on decay in strong norms such as $L^2$ occurring on timescales much faster than the diffusive timescale $\nu^{-1}$. This phenomenon, known as \textit{enhanced dissipation}, arises from the interaction between transport-induced small-scale formation and diffusion. Starting with the seminal paper \cite{CKRZ08}, enhanced dissipation has been studied extensively in a wide range of settings; see, e.g. \cite{Vukadinovic21, BCZE22, ECZM19, FengIyer19, wei2021diffusion, ELM23, BBPSenhanced}. In the shear flow context, representative works include \cite{VictorJonathanKyle,Villringer24, He22, ABN22, BCZ17, CZD20, CZG21}. 

\begin{comment}
parallel and radial shears: \cite{VictorJonathanKyle,Villringer24, He22, ABN22, BCZ17, CZD20, CZDrivas19, CZG21}

2d Hamiltonian: \cite{Vukadinovic21, DolceHamiltonian24, BCZE22}

General theory: \cite{CKRZ08, ECZM19, FengIyer19, wei2021diffusion}

log nu decay: \cite{ELM23, BBPSenhanced, IyerCooperman24}
\end{comment}

\subsection*{Acknowledgments}
The research of Kunhui Luan has been partially supported by NSF grant DMS-2238219 and George Johnson Fellowships. The authors thank Raj Beekie for helpful discussions and feedback on a first draft. 

\section{Key stochastic lemmas}

The main goal of this section is to establish two lemmas concerning the behavior of 
\begin{equation}\label{eq:randomderivative}
S_t(y):=\varphi_t'(y) = \int_0^t b'(y+\sqrt{2\nu}W_s) \, \dee s
\end{equation}
that will play a key role in the proofs of Theorems~\ref{thm:main} and~\ref{thm:main2}. 

We begin with some notation. The function $S_t$ depends on the noise realization $\omega \in \Omega$. When we wish to indicate this explicitly, we write $S_t(y,\omega)$, but otherwise suppress the dependence on $\omega$. We continue to write $b:\R \to \R$ for the $2\pi$-periodic extension of the shear profile, so that $S_t(y)$ is defined for all $y \in \R$. Let 
\begin{equation}\label{eq:distance}
    d_\T(y,y') = \min_{k \in \Z}|y-y'-2\pi k|, \qquad y,y' \in \R,
\end{equation}
which induces the standard geodesic distance on $\T = [0,2\pi)$. For $y \in \T$ and $r > 0$, we write 
$$B_r(y) = \{y \in \T: d_\T(y,y') < r\} \subseteq \T$$ to denote the standard metric ball on $\T$.

let $p > 0$ satisfy
$$ \frac{N+1}{N+3} < p < 1, $$
and define the timescale $t_\nu = \nu^{-p}$. This scale is much longer than the enhanced dissipation time for \eqref{eq:ADE}, but much shorter than the $\nu^{-1}$ diffusion timescale of the heat equation. For $0 < \delta \ll 1$, define 
\begin{equation} \label{eq:goodnoise}
\Omega_\delta := \{\omega \in \Omega: \sup_{0 \le t \le t_\nu} |\sqrt{2\nu}W_t| \le \delta\}.
\end{equation}
Throughout the paper, $\Omega_\delta$ plays the role of a set of ``good'' noise realizations. The fact that $\Omega \setminus \Omega_\delta$ is a rare event which can be treated as an error is described by the following lemma.
\begin{lemma}\label{lem:badnoise}
    For any $\delta,q>0$, there exists $\nu_0>0$ such that 
    \begin{equation}\label{eq:goodnoisebound}
    \P(\Omega\setminus \Omega_\delta) \le \nu^q, \quad \forall \, \nu \in (0,\nu_0].
\end{equation}
In particular, for $\nu$ sufficiently small we have
\begin{equation}\label{eq:badnoisebound}
\P(\Omega\setminus \Omega_\delta) \le t^{-\frac{1}{N+1}}, \quad \forall \, t\in [1,t_\nu]. 
\end{equation}
\end{lemma}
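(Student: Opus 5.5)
The plan is to use the reflection principle together with a Gaussian tail bound. By Brownian scaling, $\sup_{0\le t\le t_\nu}|W_t|$ has the same law as $\sqrt{t_\nu}\,\sup_{0\le s\le 1}|W_s|$. Hence
$$\P(\Omega\setminus\Omega_\delta) = \P\Big(\sup_{0\le s\le 1}|W_s| > \frac{\delta}{\sqrt{2\nu t_\nu}}\Big) = \P\Big(\sup_{0\le s\le 1}|W_s| > \frac{\delta}{\sqrt 2}\,\nu^{-\frac{1-p}{2}}\Big).$$

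The reflection principle gives, for $\lambda>0$,
$$\P\Big(\sup_{s\le 1}|W_s|>\lambda\Big)\le 2\,\P\Big(\sup_{s\le1}W_s>\lambda\Big)=4\,\P(W_1>\lambda)\le 4e^{-\lambda^2/2}.$$
(Alternatively, Doob's maximal inequality applied to the submartingale $e^{\theta W_s}$ gives the same bound.) Substituting $\lambda=\frac{\delta}{\sqrt2}\nu^{-(1-p)/2}$ yields
$$\P(\Omega\setminus\Omega_\delta)\le 4\exp\!\Big(-\frac{\delta^2}{4}\,\nu^{-(1-p)}\Big).$$
Since $p<1$, the exponent $1-p$ is positive. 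The right-hand side therefore decays faster than any power of $\nu$: given $\delta,q>0$, the ratio $4\exp(-\tfrac{\delta^2}{4}\nu^{-(1-p)})/\nu^q$ tends to $0$ as $\nu\to0$. This lets us choose $\nu_0$ with the ratio at most $1$ on $(0,\nu_0]$, which proves \eqref{eq:goodnoisebound}. This step is the only place where $p<1$ is used, and it is the whole content of the lemma; there is no real obstacle.

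For \eqref{eq:badnoisebound}, we use that $t\le t_\nu=\nu^{-p}$, so
$$t^{-\frac{1}{N+1}}\ge \nu^{\frac{p}{N+1}}.$$
We apply the first part with $q=\frac{p}{N+1}$ (any $q\ge p/(N+1)$ works, e.g. $q=1$, since $\nu\le1$). For $\nu\le\nu_0$ this gives
$$\P(\Omega\setminus\Omega_\delta)\le \nu^{\frac{p}{N+1}}\le t^{-\frac{1}{N+1}}$$
for all $t\in[1,t_\nu]$. The interval is nonempty only when $t_\nu\ge1$, which holds since $\nu\le 1$.
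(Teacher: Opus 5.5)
Your proof is correct and follows essentially the same route as the paper: the reflection principle plus a Gaussian tail bound gives $\P(\Omega\setminus\Omega_\delta)\lesssim \exp\bigl(-c\,\delta^2\nu^{-(1-p)}\bigr)$, which beats any power of $\nu$ because $p<1$, and then $t\le t_\nu=\nu^{-p}$ lets you take $q\ge p/(N+1)$. Your $\le$ in the step $\P(\sup|W|>\lambda)\le 2\,\P(\sup W>\lambda)$ is more accurate than the equality written in the paper, and you are right that $q=p/(N+1)$ already suffices, so the paper's strict inequality $q>p/(N+1)$ is not needed.
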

\begin{proof}
By the symmetry of Brownian motion and the reflection principle, we have
    \begin{align*}
    \P\left(\sup_{0 \le t \le t_\nu} |\sqrt{2\nu}W_t| > \delta\right) = 2\P\left(\sup_{0 \le t \le t_\nu} \sqrt{2\nu}W_t > \delta\right)&= 4\P(\sqrt{2\nu}W_{t_\nu}>\delta) = 4 \P(W_{2\nu t_\nu} > \delta). 
\end{align*}
Thus,
\begin{equation}
   \P\left(\sup_{0 \le t \le t_\nu} |\sqrt{2\nu}W_t| > \delta\right) = \frac{4}{\sqrt{4\pi \nu t_\nu}} \int_{\delta}^\infty \exp\left(-\frac{z^2}{4\nu t_\nu}\right) \, \dee z \le C \exp\left(-\frac{\delta^2}{8 \nu^{1-p}}\right),
\end{equation}
where $C> 0$ depends only on $\delta$. The first claim in the lemma follows immediately. We then obtain the second by choosing $q > p/(N+1)$.

%Here, we use the scaling of Brownian motion $\sqrt{2\nu}W_s \stackrel{d}{=} W_{2\nu s}$ and the definition $t_\nu = \nu^{-p}$. Observe the above probability decays exponentially fast to zero as $\nu \to 0$, so in particular, we can find some $\nu_0>0$ such that the following holds for all $\nu \in (0,\nu_0]$.
%\[
%4\frac{\nu^{\frac{1-p}{2}}}{\delta\sqrt{\pi}} \exp \bigg(-\frac{\delta^2}{4\nu^{1-p}}\bigg)\leq \nu^q.
%\]
%Next, since $t\leq t_\nu \leq \nu^{-p}$, we get $t^{-\frac{1}{N+1}}\geq \nu^{\frac{p}{N+1}}$. The second claim \eqref{eq:badnoisebound} follows once we pick $q\geq \frac{p}{N+1}$, since $\nu$ is small. 
\end{proof}

For $\omega \in \Omega$, $c > 0$, and $t > 0$, let 
\begin{equation} \label{eq:Atdef}
    A_{t,\omega}(c) = \{y \in \T = [0,2\pi): |S_t(y,\omega)| \le c t^{\tfrac{1}{N+1}}\}.
\end{equation}
The lemma below provides an estimate on the measure of $A_{t,\omega}(c)$ for $\omega \in \Omega_\delta$ and $1 \le t \le t_\nu$. In the statement and throughout the remainder of the paper, we write $a_1 \leqc a_2$ when $a_1 \le C a_2$ for a constant $C > 0$ that is independent of $\nu \in (0,1]$, $t > 0$, and $\omega \in \Omega$, but depends possibly on the shear $b$ and $\delta > 0$ from \eqref{eq:goodnoise}. Recall also from Assumption~\ref{assumption} that $\{y_i\}_{i=1}^m$ denotes the set of critical points of $b$.

\begin{lemma}\label{key_lemma}

Let $\delta > 0$ be sufficiently small, depending only on $b$. Then, there exists $c > 0$ such that for every $\omega \in \Omega_\delta$, $1 \le t \le t_\nu$, and $\nu \in (0,1]$, we have 
\begin{equation} \label{eq:keylemma}
    A_{t,\omega}(c) \subseteq \bigcup_{i=1}^m I_i,
\end{equation}
where $\{I_i\}_{i=1}^m$ is a family of disjoint intervals on $\T$ satisfying 
\begin{equation}
    \sum_{i=1}^m |I_i| \leqc t^{-\frac{1}{N+1}}.
\end{equation}
\end{lemma}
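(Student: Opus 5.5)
The strategy is to compare $S_t(y)$ with its deterministic counterpart $tb'(y)$, exploiting the fact that on $\Omega_\delta$ the trajectory $y+\sqrt{2\nu}W_s$ stays within distance $\delta$ of $y$ for all $s\le t\le t_\nu$. Write
$$S_t(y)=\int_0^t b'(y+w_s)\,\dee s,\qquad w_s:=\sqrt{2\nu}W_s,\quad |w_s|\le\delta .$$
The argument has three steps: control $S_t$ away from the critical points, reduce to a local problem near each $y_i$, and handle that local problem, which is the main obstacle.

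\emph{Step 1 (away from critical points).} By Assumption~\ref{assumption}, $b'$ has finitely many zeros. Fix $r>0$ small so that the balls $B_{2r}(y_i)$ are disjoint and $|b'|\ge \kappa>0$ on $\T\setminus\bigcup_i B_{r/2}(y_i)$. Take $\delta<r/2$. For $y\notin\bigcup_i B_r(y_i)$, every point $y+w_s$ lies outside $\bigcup_i B_{r/2}(y_i)$. By continuity, the sign of $b'$ is constant on each connected component of this set, and the path $s\mapsto y+w_s$ stays in one component. Hence $|S_t(y)|\ge \kappa t\ge \kappa t^{1/(N+1)}$ for $t\ge1$. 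So if $c<\kappa$, the set $A_{t,\omega}(c)$ lies inside $\bigcup_i B_r(y_i)$, and it suffices to work near a single critical point $y_i$.

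\emph{Step 2 (local monotonicity).} Near $y_i$, write $b'(z)=a(z-y_i)^{k}(1+O(|z-y_i|))$ with $a\neq0$ and $k\le N$, so that $b''$ behaves like $(z-y_i)^{k-1}$. The key structural fact is that $S_t$ is monotone, or at least well controlled, in $y$ on $B_r(y_i)$. Indeed,
$$S_t'(y)=\int_0^t b''(y+w_s)\,\dee s .$$
\begin{itemize}
\item If $k$ is odd, $b''\ge0$ (after fixing the sign of $a$) on $B_{2r}(y_i)$, so $S_t$ is nondecreasing there.
\item If $k$ is even, $b'$ has a fixed sign on $B_{2r}(y_i)$. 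In that case I would instead use that $b'(z)\ge a'|z-y_i|^k$, which gives $S_t(y)\ge a'\int_0^t |y+w_s-y_i|^k\,\dee s$ directly.
\end{itemize}

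\emph{Step 3 (main obstacle: a lower bound on $|S_t|$ away from a short interval).} I need to show that $|S_t(y)|$ is large once $y$ is at distance $\gtrsim t^{-1/(N+1)}$ from a suitable random center. In the even case the center is the deterministic point $y_i$ shifted by the path: I would define $g(y)=\int_0^t|y-y_i+w_s|^k\,\dee s$, a convex function of $y$. Let $y^*$ be its minimizer. Convexity, plus the fact that $g$ is a sum of $k$-th powers, gives the lower bound
$$g(y)\gtrsim t|y-y^*|^k .$$
This holds because for any fixed $s$, $|u+w_s|^k+|u'+w_s|^k\gtrsim |u-u'|^k$; combining this with convexity yields $g(y)\ge g(y^*)+c\,t|y-y^*|^k$. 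In the odd case I would take $y^*$ to be the unique zero of the monotone function $S_t$ on $B_r(y_i)$. The same pairing inequality, now applied to the integrand of
$$S_t(y)-S_t(y^*)=\int_0^t\bigl(b'(y+w_s)-b'(y^*+w_s)\bigr)\,\dee s,$$
uses $(u^k-v^k)\operatorname{sgn}(u-v)\gtrsim|u-v|^k$ for odd $k$, together with a perturbation argument to absorb the $O(|z-y_i|)$ corrections when $r$ is small. This gives $|S_t(y)|\gtrsim t|y-y^*|^k$.

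In either case, $|S_t(y)|\le c\,t^{1/(N+1)}$ forces
$$|y-y^*|\lesssim \bigl(c\,t^{1/(N+1)-1}\bigr)^{1/k}=c^{1/k}\,t^{-N/((N+1)k)}\le C\,t^{-1/(N+1)},$$
where the last inequality uses $k\le N$ and $t\ge1$. I therefore set $I_i=B_{Ct^{-1/(N+1)}}(y^*)\cap B_r(y_i)$. These intervals are disjoint, since they sit in the disjoint balls $B_r(y_i)$, and their lengths sum to $\lesssim t^{-1/(N+1)}$, which is the claim.

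The delicate point is making the pairing inequality robust to the higher-order corrections in $b'$, uniformly in the random path $w$. I expect the bound $|w_s|\le\delta$ on $\Omega_\delta$ together with $r$ small to suffice for this.
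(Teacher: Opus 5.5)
Your proposal is correct and is essentially the paper's proof. It has the same Step 1: on $\Omega_\delta$ the path $y+\sqrt{2\nu}W_s$ stays away from the critical points, so $|S_t|\gtrsim t$ there. It then uses the same parity split. The even case rests on $b'\gtrsim |z-y_i|^{k}$ plus a random centering: the paper uses H\"older to center at $y_i-t^{-1}\sqrt{2\nu}\int_0^t W_s\,\dee s$, while your minimizer $y^*$ works via the pairing inequality together with $g(y^*)\le g(y)$. The odd case rests on the odd-power inequality $u^k-v^k\gtrsim (u-v)^k$.

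The ``perturbation'' step you flag is handled in the paper by the lower bound $b''(z)\gtrsim (z-y_i)^{k-1}$, integrated between the two points. The paper also compares two arbitrary points of the sublevel set near $y_i$ rather than centering at a zero of $S_t$, which spares you from proving that $y^*$ exists. It does exist, by the intermediate value theorem, since $S_t$ has opposite signs at $y_i\pm r$.
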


\begin{proof}
We assume at least that 
$$ \delta < \frac{1}{4} \min_{i \neq j}d_{\T}(y_i,y_j),$$
which ensures that $B_{2\delta}(y_i) \cap B_{2\delta}(y_j) = \emptyset$ for $i \neq j$.

\textbf{Step 1}: We first claim that 
\begin{equation} \label{eq:step1goal}
A_{t,\omega}(c) \subseteq \bigcup_{i=1}^m B_{2\delta}(y_i)
\end{equation}
provided that $c$ is sufficiently small. Let $\mathcal{M} = \{y_i\}_{i = 1}^m$ and fix $y \in \T$ with $d_\T(y,\mathcal{M})\geq 2\delta$. To prove \eqref{eq:step1goal}, we must show that $y\not\in A_{t,\omega}(c)$. From the definition of $\Omega_\delta$, we have
    \[
    \inf_{s \in [0,t]} d_\T(y+\sqrt{2\nu}W_s, \mathcal{M}) \ge d_\T(y,\mathcal{M}) - \sup_{s \in [0,t]} |\sqrt{2\nu} W_s| \ge \delta.
    \]
    Therefore, $b'(y+\sqrt{2\nu} W_s)$ has a fixed sign for $s \in [0,t]$ and 
    $$ \inf_{s \in [0,t]} |b'(y+\sqrt{2\nu} W_s)| \gtrsim 1. $$
    It follows that 
    $$ |S_t(y)| = \left|\int_0^t b'(y+\sqrt{2\nu}W_s) \, \dee s\right| = \int_0^t |b'(y+\sqrt{2\nu}W_s)| \, \dee s \gtrsim t \ge t^{\frac{1}{N+1}}. $$
    This implies $y \not \in A_{t,\omega}(c)$ for $c$ sufficiently small, completing the proof of \eqref{eq:step1goal}.
     
\textbf{Step 2}: For each critical point $y_i \in [0,2\pi)$, let $J_i = (y_i - 2\delta, y_i + 2\delta) \subseteq \R$ and define 
     \begin{equation} \label{eq:Aidef}
         A_{t,\omega}^i(c) = \{y\in J_i: |S_t(y)|\le ct^{\frac{1}{N_i+1}}\} \subseteq \R,
     \end{equation}
where $1\le N_i \le N$ denotes the order of vanishing of $b'$ at $y_i$. Observe that 
\begin{equation} \label{eq:Jiprojection}
    \pi(J_i) = B_{2\delta}(y_i),
\end{equation}
where $\pi: \R \to \T$ denotes the canonical projection $\pi(y) = y \mod 2\pi$. Since $b'$ is $2\pi$-periodic we have $S_t(y) = S_t(\pi(y))$ for any $y \in \R$, and it follows from \eqref{eq:Jiprojection} that
\begin{equation} \label{eq:Aiinclusion}
    A_{t,\omega}(c) \cap B_{2\delta}(y_i) \subseteq \pi(A_{t,\omega}^i(c)).
\end{equation}
Here, we have noted also that $N_i \le N$ and the inclusion above would be equality if $N_i$ were replaced by $N$ in \eqref{eq:Aidef}.

In view of Step 1 and \eqref{eq:Aiinclusion}, to complete the proof it is sufficient to show that for each $1 \le i \le m$, there exists an interval $\hat{I}_i \subseteq \R$ such that 
\begin{equation}\label{eq:step2goal}
         A_{t,\omega}^i(c) \subseteq \hat{I}_i \quad \text{and} \quad |\hat{I}_i| \leqc t^{-\frac{1}{N_i+1}}.
     \end{equation}
The intervals $I_i$ in the lemma statement are then given by $\pi(\hat{I}_i)$. We split the proof of \eqref{eq:step2goal} into cases depending on the parity of $N_i$.

\textit{Case 1}: First, assume that $N_i$ is even and suppose without loss of generality that $b^{(N_i+1)}(y_i) > 0$. Then, by expanding $b'$ around $y_i$ to order $N_i$ using Taylor's theorem, it follows that $b'(y) \ge 0$ near $y_i$ and
\begin{equation} \label{eq:evenlower}
   b'(y) \gtrsim (y-y_i)^{N_i} \quad \text{for} \quad |y-y_i| \le 3 \delta,
\end{equation}
provided that $\delta$ is taken sufficiently small depending on $b$. Fix $y \in A_{t,\omega}^i(c) \subseteq J_i$. By \eqref{eq:evenlower} and the definitions of $\Omega_\delta$ and $A^i_{t,\omega}(c)$, we have
\begin{equation}\label{eq:even_St}
    c t^{\frac{1}{N_i+1}} \ge S_t(y) = \int_0^t b'(y+\sqrt{2\nu}W_s) \, \d s \gtrsim \int_0^t (y+\sqrt{2\nu} W_s-y_i)^{N_i}\, \d s.
\end{equation}
By Hölder's inequality,
   \[
   \bigg|\int_0^t (y+\sqrt{2\nu} \ W_s-y_i) \, \d s \bigg|^{N_i} \leq t^{N_i-1} \int_0^t (y+\sqrt{2\nu}\  W_s-y_i)^{N_i} \, \d s, \]
which after defining $a = \int_0^t W_s \, \d s \in \R$ becomes 
\begin{equation} \label{eq:Holder2}
    t\,|y-t^{-1}\sqrt{2\nu} \, a - y_i|^{N_i} \le \int_0^t (y+\sqrt{2\nu}\  W_s-y_i)^{N_i} \, \d s. 
\end{equation}
Putting \eqref{eq:Holder2} into \eqref{eq:even_St} yields
$$\left|y+t^{-1}\sqrt{2\nu}\,a-y_i\right|\lesssim t^{-\frac{1}{N_i+1}}.$$
Therefore, defining $\bar{y} = y_i-t^{-1}\sqrt{2\nu}\, a$, we have shown that $$y \in \left(\bar{y} - Ct^{-\frac{1}{N_i+1}},\bar{y} + Ct^{-\frac{1}{N_i+1}}\right):=I$$ for some constant $C>0$ depending only on $b$ and $\delta$. This proves \eqref{eq:step2goal} with $\hat{I}_i = I \cap J_i$.

\textit{Case 2}: We now suppose that $N_i$ is odd. Then, $b''$ vanishes to even order at $y_i$ and we may suppose without loss of generality that $b^{(N_i+1)}(y_i) > 0$. Then, as in \eqref{eq:evenlower}, it follows from Taylor's theorem that
\begin{equation} \label{eq:oddlower}
    b''(y) \gtrsim (y-y_i)^{N_i-1} \quad \text{for} \quad |y-y_i| \le 3 \delta
\end{equation}
whenever $\delta$ is sufficiently small. Fix $y, \bar{y} \in A^i_{t,\omega}(c)$ with $y > \bar{y}$. To prove \eqref{eq:step2goal}, it is sufficient to show that 
\begin{equation} \label{eq:step2bgoal}
    y-\bar{y} \leqc t^{-\frac{1}{N_i+1}}.
\end{equation}
By \eqref{eq:oddlower} and the definition of $\Omega_\delta$, for any $s \in [0,t]$ we have
   \begin{align*}
         b'(y+\sqrt{2\nu}W_s)-b'(\bar{y}+\sqrt{2\nu}W_s) &= \int_{\bar{y}}^{y} b''(z+\sqrt{2\nu}W_s)\,\d z \\ &\gtrsim \int_{\bar{y}}^{y} (z+\sqrt{2\nu}W_s-y_i)^{N_i-1} \, \d z\\ &\gtrsim (y+\sqrt{2\nu}W_s-y_i)^{N_i} - (\bar{y}+\sqrt{2\nu}W_s-y_i)^{N_i} \\
         &\gtrsim (y-\bar{y})^{N_i}.
   \end{align*}
  In the final line we have applied the elementary inequality $x^{k}-y^{k} \ge c_k (x-y)^{k}$, which holds for any $x>y$ and odd $k$. Then, from the definition of $A_{t,\omega}^i(c)$ we obtain
  \begin{equation}\label{eq:odd_bound}
      2 c t^{\frac{1}{N_i+1}}\ge S_t(y) - S_t(\bar{y}) = \int_0^t b'(y+\sqrt{2\nu}W_s)-b'(\bar{y}+\sqrt{2\nu}W_s) \, \d s \gtrsim t (y-\bar{y})^{N_i}. 
  \end{equation}
  This implies \eqref{eq:step2bgoal} and completes the proof.
\end{proof}

\begin{remark}\label{rmk:zeros}
The argument in Step 2 above also characterizes the zero set of $S_t$ in the vicinity of the critical points. In particular, for $\delta > 0$ sufficiently small, $\omega \in \Omega_\delta$, and $1 \le t \le t_\nu$ there holds
\begin{equation}\label{eq:zeros}
    \#\{y \in B_{2\delta}(y_i): S_t(y,\omega) = 0\} \le 1, \quad 1 \le i \le m.
\end{equation}
Indeed, when $b'$ vanishes to even order at $y_i$, \eqref{eq:even_St} shows that $S_t(y) = 0$ only if $y = y_i$ and $W_s = 0$ for every $s \in [0, t]$, which implies that $S_t(y)$ is almost surely non-vanishing on $B_{2\delta}(y_i)$. In the odd order case, it is an immediate consequence of \eqref{eq:odd_bound} that $S_t(y) - S_t(\bar{y}) > 0$ for distinct points $y,\bar{y} \in B_{2\delta}(y_i)$, and hence $S_t(y)$ can have at most one zero in $B_{2\delta}(y_i)$. Notice moreover that
\[
S_t'(y)=\int_0^t b''(y+\sqrt{2\nu}W_s)\, \dee s
\]
has the same structure as \(S_t\), with \(b'\) replaced by \(b''\). If \(b'\) vanishes to order \(N_i\) at \(y_i\), then either \(b''(y_i)\neq 0\), in which case \(b''\) has no zeros in a sufficiently small neighborhood of \(y_i\), or \(b''\) vanishes to finite order \(N_i-1\) at \(y_i\). Therefore, the argument of Step 2 can also be used to show that \eqref{eq:zeros} holds with \(S_t\) replaced by \(S_t'\), after possibly shrinking \(\delta\).
\end{remark}

The next lemma will be needed to control error terms that arise in the proofs of Theorems~\ref{thm:main} and~\ref{thm:main2}. 

\begin{lemma}\label{key_lemma_2} Let $\delta > 0$ be sufficiently small and $c>0$ be the associated constant guaranteed by Lemma~\ref{key_lemma}. Fix $t \in [1,t_\nu]$ and $\omega \in \Omega_\delta$, and define
$$ J = \T \setminus \cup_{i=1}^m I_i, $$
where $\{I_i\}_{i=1}^m$ is as in \eqref{eq:keylemma}. Then,
    \[
  \int_J\bigg|\frac{\dee}{\dee y}\left(\frac{1}{S_t(y)}\right)\bigg| \, \dee y \lesssim t^{-\frac{1}{N+1}}.
    \]
\end{lemma}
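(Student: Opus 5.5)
The plan is to control the total variation of $1/S_t$ on $J$ by counting monotonicity pieces. On $J$ we have $|S_t|\ge c\,t^{1/(N+1)}$, so $S_t$ does not vanish there. The integral $\int_J |S_t'|/S_t^2\,\dee y$ equals the total variation of $1/S_t$ over the connected components of $J$. Whenever $1/S_t$ is monotone on a subinterval, its variation there is at most $2\sup_J |1/S_t| \le 2c^{-1}t^{-1/(N+1)}$. The statement therefore reduces to a uniform bound, independent of $t,\nu,\omega$, on the number of monotonicity intervals of $S_t$ restricted to $J$, i.e.\ on the number of components of $J$ plus the number of sign changes of $S_t'$ inside $J$.

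\textbf{Step 1: components.} $J$ is the complement in $\T$ of at most $m$ intervals, so it has at most $m$ components. The constant here is uniform.

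\textbf{Step 2: away from the critical points.} Take $y$ with $d_\T(y,\mathcal M)\ge 2\delta$. Repeating Step 1 of the proof of Lemma~\ref{key_lemma} gives $|S_t(y)|\gtrsim t$. If $\delta$ is also small enough that no zero of $b''$ lies in the annuli $\delta\le d_\T(\cdot,\mathcal M)\le 3\delta$ except possibly at the critical points themselves, then a crude bound suffices. I would bound $|S_t'(y)|\le t\|b''\|_\infty$ and estimate
\[
\int_{\{d_\T(y,\mathcal M)\ge2\delta\}}\frac{|S_t'|}{S_t^2}\,\dee y\lesssim \frac{t}{t^2}=t^{-1}\le t^{-\frac1{N+1}} .
\]
This is trivially enough, so no monotonicity count is needed in this region.

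\textbf{Step 3: near each critical point.} On $B_{2\delta}(y_i)\cap J$ I would use Remark~\ref{rmk:zeros}. For $\omega\in\Omega_\delta$ and $1\le t\le t_\nu$ (after shrinking $\delta$), $S_t'$ has at most one zero in $B_{2\delta}(y_i)$. The case where $b''(y_i)\ne0$ gives no zeros; the finite-order case uses the Step 2 argument of Lemma~\ref{key_lemma} applied to $b''$, where the odd/even structure yields uniqueness. Hence $S_t$ has at most two monotonicity intervals on $B_{2\delta}(y_i)$, and intersecting with $J$ produces at most a bounded number of pieces, since $J\cap B_{2\delta}(y_i)$ has at most two components. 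On each piece $S_t$ is monotone and of one sign, so $1/S_t$ is monotone with
\[
\int_{\text{piece}}\Bigl|\tfrac{\dee}{\dee y}\tfrac1{S_t}\Bigr|\,\dee y=\Bigl|\tfrac1{S_t(\text{end}_1)}-\tfrac1{S_t(\text{end}_2)}\Bigr|\le \frac{2}{c}\,t^{-\frac1{N+1}} .
\]
Summing over the at most $O(m)$ pieces gives the claimed bound.

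\textbf{Main obstacle.} The delicate point is the uniform count of zeros of $S_t'$ near $y_i$, since $S_t'$ again averages $b''$ along a random path. I plan to use Remark~\ref{rmk:zeros} exactly as stated, and I expect it to do the work. One subtlety needs checking. In the even-order case, Remark~\ref{rmk:zeros} asserts only an almost-sure statement for $S_t$, but what we need is a zero count for $S_t'$, where $b''$ vanishes to odd order $N_i-1$. Here the strict-monotonicity argument \eqref{eq:odd_bound}, applied to $b''$, gives at most one zero deterministically on $\Omega_\delta$.

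If $b''(y_i)\neq 0$, then $b''$ has a fixed sign on $B_{3\delta}(y_i)$, so $S_t'$ has no zeros on $B_{2\delta}(y_i)$. If $b''$ vanishes to order $N_i-1$, we split by parity of $N_i-1$. When $N_i-1$ is odd, \eqref{eq:odd_bound} applies directly. When $N_i-1$ is even, $b''$ has a fixed sign near $y_i$, so $S_t'$ has at most one zero. In every case the count is uniform in $t$, $\nu$ and $\omega\in\Omega_\delta$.
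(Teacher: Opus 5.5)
Your proposal is correct and is essentially the paper's proof. You use the crude bound $\|S_t'\|_{L^\infty}/t^2\lesssim t^{-1}$ away from $\cup_i B_{2\delta}(y_i)$. On each $J\cap B_{2\delta}(y_i)$ you use the at-most-one-zero property of $S_t'$ from Remark~\ref{rmk:zeros}, and your parity check is a correct unpacking of that remark. This splits the set into at most three pieces on which $1/S_t$ is monotone, each contributing at most $\sup_J|S_t|^{-1}\lesssim t^{-\frac{1}{N+1}}$.

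The extra smallness condition you impose on $\delta$ in Step 2, that $b''$ has no zeros in the annuli, is not needed, since the crude bound never uses it.
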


\begin{proof}

We split the integral into two terms:
\begin{align*}
    \int_J\bigg|\frac{\dee}{\dee y}\left(\frac{1}{S_t(y)}\right)\bigg| \, \dee y & = \int_{J\cap (\cup_{i}B_{2\delta}(y_i))^c}\frac{|S_t'(y)|}{S_t^2(y)} \,  \dee y +\sum_{i}\int_{J\cap B_{2\delta}(y_i)}\bigg|\frac{\dee}{\dee y}\left(\frac{1}{S_t(y)}\right)\bigg| \, \dee y \\
    &:= T_1 + T_2.
\end{align*}
For the estimate of $T_1$, we first observe as in Step 1 of the proof of Lemma~\ref{key_lemma} that $|S_t(y)|\gtrsim t$ for $y \in J \cap (\cup_i B_{2\delta}(y_i))^c$. Therefore,
$$ T_1 \leqc \frac{\|S_t'\|_{L^\infty}}{t^2} \le \frac{t\|b''\|_{L^\infty}}{t^2} \le t^{-1} \le t^{-\frac{1}{N+1}}. $$
We now turn to the second term. By Remark~\ref{rmk:zeros}, $S_t''$ has at most one zero in each $B_\delta(y_i)$, provided that $\delta$ is sufficiently small. Thus, for every $1 \le i \le m$ there exists a family of intervals $\{I_{i,j}\}_{j=1}^3$ such that $$J \cap B_{2\delta}(y_i) = \bigcup_{j=1}^3 I_{i,j}$$ and $S_t'$ has a fixed sign on each $I_{i,j}$. There are only three intervals in the decomposition above since only $I_i$ intersects $B_{2\delta}(y_i)$. For any $I_{i,j}$, we have 
$$ \int_{I_{i,j}} \bigg|\frac{\dee}{\dee y}\left(\frac{1}{S_t(y)}\right)\bigg| \, \dee y = \left|\int_{I_{i,j}} \frac{\dee}{\dee y}\frac{1}{S_t(y)} \, \dee y \right| \leqc t^{-\frac{1}{N+1}}, $$
where in the inequality we applied the fundamental theorem of calculus and the fact that $|S_t(y)| \gtrsim t^{\frac{1}{N+1}}$ for $y \in J$. Summing the inequality above over $1 \le i \le m$ and $1 \le j \le 3$ establishes that $T_2 \leqc t^{-\frac{1}{N+1}}$, which concludes the proof.
\end{proof}

\section{Stochastic Integration by parts: Proof of Theorem~\ref{thm:main}} \label{sec:IBP}
In this section, we prove Theorem \ref{thm:main}. In view of \eqref{eq:stochasticrep}, our goal is to show that for any initial datum $f_0\in \ell_k^2 (W_y^{1,1})$ satisfying \eqref{eq:meanzero} and test function $g \in \ell_k^2 (W_y^{1,1})$ there holds
\begin{equation}\label{eq:IBP_goal}
\int_{\T^2}\E[f_{0}\circ \Phi_{t}(x,y)] \,  g(x,y)  \, \dee x \, \dee y \leqc \langle t \rangle^{-\frac{1}{N+1}}\|f_0\|_{\ell_k^2 (W_y^{1,1})} \|g\|_{\ell_k^2 (W_y^{1,1})}.
\end{equation}

The key step in the proof is applying Lemmas~\ref{key_lemma} and~\ref{key_lemma_2} to deduce the following non-stationary phase type estimate pointwise in $\omega \in \Omega_\delta$. Recall that $\varphi_t$ denotes the random phase function defined in \eqref{eq:randomphase}.

\begin{lemma} \label{lem:IBPkey}
    Let $\delta>0$ be sufficiently small and $\omega \in \Omega_\delta$. For every $F, g \in W^{1,1}(\T)$, $k \in \Z \setminus \{0\}$, and $1 \le t \le t_\nu$ there holds 
    $$ \left|\int_\T e^{-i k \varphi_t(y)} F(y) g(y) \, \dee y \right| \leqc t^{-\frac{1}{N+1}} \|F\|_{W^{1,1}} \|g\|_{W^{1,1}}. $$
\end{lemma}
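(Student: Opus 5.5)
The plan is to mimic the deterministic decomposition \eqref{eq:standarddecomp} with the partition supplied by Lemma~\ref{key_lemma}. Fix $\omega \in \Omega_\delta$ and $1 \le t \le t_\nu$, and let $\{I_i\}_{i=1}^m$ and $J = \T \setminus \cup_i I_i$ be as in Lemma~\ref{key_lemma} and Lemma~\ref{key_lemma_2}. Write $G = Fg$. Since $W^{1,1}(\T)$ is an algebra with $\|G\|_{L^\infty} \le \|F\|_{L^\infty}\|g\|_{L^\infty} \leqc \|F\|_{W^{1,1}}\|g\|_{W^{1,1}}$ and $\|G'\|_{L^1} \leqc \|F\|_{W^{1,1}}\|g\|_{W^{1,1}}$, it suffices to bound the integral by $t^{-1/(N+1)}(\|G\|_{L^\infty} + \|G'\|_{L^1})$. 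We split the integral into the part over $\cup_i I_i$ and the part over $J$.

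On $\cup_i I_i$ we use the trivial bound. Since $|I_i|$ sums to $\leqc t^{-1/(N+1)}$, this part is at most $\|G\|_{L^\infty}\sum_i|I_i| \leqc t^{-1/(N+1)}\|G\|_{L^\infty}$.

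On $J$, which is a finite union of at most $m$ intervals, we have $|S_t| = |\varphi_t'| \ge c t^{1/(N+1)}$. We therefore write $e^{-ik\varphi_t} = \frac{1}{-ik S_t}\partial_y e^{-ik\varphi_t}$ and integrate by parts on each component interval. This produces three kinds of terms:
\begin{itemize}
\item[(a)] boundary terms at the at most $2m$ endpoints, each bounded by $\|G\|_{L^\infty}/(|k|\,|S_t|) \leqc t^{-1/(N+1)}\|G\|_{L^\infty}$;
\item[(b)] the term $\int_J |G'|/(|k||S_t|)\,\dee y \leqc t^{-1/(N+1)}\|G'\|_{L^1}$, again from the lower bound on $|S_t|$;
\item[(c)] the term $\int_J |G|\,|\partial_y(1/S_t)|\,\dee y \le \|G\|_{L^\infty}\int_J|\partial_y(1/S_t)|\,\dee y$, which is $\leqc t^{-1/(N+1)}\|G\|_{L^\infty}$ by Lemma~\ref{key_lemma_2}.
\end{itemize}
Using $|k|\ge 1$ throughout, the three bounds combine to give the claim.

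Integration by parts for $W^{1,1}$ functions against the smooth factor $e^{-ik\varphi_t}/S_t$ is justified because $W^{1,1}$ functions are absolutely continuous and $S_t$ is smooth and nonvanishing on $\overline{J}$. The only genuinely delicate input is control of the total variation of $1/S_t$ on $J$, where $S_t$ may oscillate. That is exactly what Lemma~\ref{key_lemma_2} supplies, so I expect no substantial obstacle beyond bookkeeping of the endpoints and the periodic wrap-around of the intervals on $\T$.
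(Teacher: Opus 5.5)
Your proposal is correct and follows the paper's proof essentially step for step. Both arguments use the trivial bound on $\bigcup_i I_i$ via $W^{1,1}(\T)\hookrightarrow L^\infty(\T)$ and Lemma~\ref{key_lemma}, then integrate by parts on $J$, producing a boundary term and a $\partial_y(Fg)$ term (both controlled by $|S_t|\gtrsim t^{1/(N+1)}$) and a $\partial_y(1/S_t)$ term controlled by Lemma~\ref{key_lemma_2}. Your remarks on absolute continuity of $W^{1,1}$ functions and on the at most $2m$ endpoints only make explicit details the paper leaves implicit.
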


\begin{proof}
Let $\{I_j\}_{j=1}^m$ denote the collection of intervals guaranteed by Lemma~\ref{key_lemma} and define $J = \T \setminus \cup_{j=1}^m I_j$. We have 
\begin{equation} \label{eq:IBPsplit}
   \left|\int_\T e^{-i k \varphi_t(y)} F(y) g(y) \, \dee y \right| \le \sum_{j=1}^m\left|\int_{I_j}e^{-i k \varphi_t(y)} F(y) g(y) \, \dee y \right| + \left|\int_J e^{-i k \varphi_t(y)} F(y) g(y) \, \dee y \right|.
\end{equation}
For the estimate of the first term above, we apply Lemma~\ref{key_lemma} and the Sobolev embedding $W^{1,1}(\T) \hookrightarrow L^\infty(\T)$ to obtain
\begin{equation}\label{eq:IBPterm1}
\sum_{j=1}^m\left|\int_{I_j}e^{-i k \varphi_t(y)} F(y) g(y) \, \dee y \right| \le \|F\|_{L^\infty} \|g\|_{L^\infty} \sum_{j=1}^m |I_j| \leqc t^{-\frac{1}{N+1}}\|F\|_{W^{1,1}}\|g\|_{W^{1,1}}.
\end{equation}
To bound the integral over $J$, we begin by using the fact that $ \varphi_t'(y) = S_t(y)$ to rewrite the term as
\begin{equation}\label{eq:stationary phase}
     \left|\int_{J}e^{-ik\varphi_t(y)} F(y)g(y) \, \d y\right| = \left|\frac{i}{k} \int_{J} \frac{1}{S_t(y)}\pa_y (e^{-ik\varphi_t(y)})F(y)g(y) \, \d y\right|.
\end{equation}
Integration by parts yields
\begin{align}\label{eq:IBPthreeterms}
   \bigg|\frac{i}{k}& \int_{J}\frac{1}{S_t(y)}\pa_y (e^{-ik\varphi_t(y)})F(y)g(y) \, \d y \bigg|= \bigg|\frac{i}{k} (B+T_1+T_2)\bigg|,
\end{align}
where $B$ collects the boundary terms and $T_1$, $T_2$ are defined as 
\begin{equation}
     T_1 := -\int_{J} \frac{\dee}{\dee y} \left(\frac{1}{S_t(y)}\right) e^{-ik\varphi_t(y)}F(y)g(y) \, \dee y, \quad
    T_2 := -\int_{J}  \frac{1}{S_t(y)} e^{-ik\varphi_t(y)}\pa_y(F(y)g(y))\, \dee y.
\end{equation}
Since Lemma~\ref{key_lemma} guarantees that $|S_t(y)| \gtrsim t^{\frac{1}{N+1}}$ for $y \in J$, we have 
$$|B| \leqc t^{-\frac{1}{N+1}} \|F\|_{L^\infty} \|g\|_{L^\infty} \leqc t^{-\frac{1}{N+1}} \|F\|_{W^{1,1}} \|g\|_{W^{1,1}}.$$
Similarly, 
$$ |T_2| \leqc t^{-\frac{1}{N+1}}(\|\pa_y F\|_{L^1}\|g\|_{L^\infty}+ \|\pa_y g\|_{L^1}\|F\|_{L^\infty}) \leqc t^{-\frac{1}{N+1}}\|F\|_{W^{1,1}}\|g\|_{W^{1,1}}. $$
Lastly, using Lemma~\ref{key_lemma_2} we find 
\begin{align}\label{estimate:good}
    |T_1|&\le \|F\|_{L^\infty}\|g\|_{L^\infty} \int_{J} \left|\frac{\dee}{\dee y} \frac{1}{S_t(y)}\right| \, \dee y \leqc  t^{-\frac{1}{N+1}}\|F\|_{W^{1,1}}\|g\|_{W^{1,1}}.
\end{align}
The estimates above, together with \eqref{eq:IBPthreeterms}, yields
\begin{equation} \label{eq:IBPsecondterm}
    \bigg|\frac{i}{k} \int_{J}\frac{1}{S_t(y)}\pa_y (e^{-ik\varphi_t(y)})F(y)g(y) \, \d y \bigg| \leqc t^{-\frac{1}{N+1}} \|F\|_{W^{1,1}}\|g\|_{W^{1,1}}.
\end{equation}
Putting \eqref{eq:IBPterm1} and \eqref{eq:IBPsecondterm} into \eqref{eq:IBPsplit} completes the proof.
\end{proof}

We are now ready to prove Theorem~\ref{thm:main}.

\begin{proof}[Proof of Theorem~\ref{thm:main}]
For $t \ge t_\nu = \nu^{-p}$, the bound \eqref{eq:IBP_goal} is an immediate consequence of standard enhanced dissipation estimates. Indeed, it is well known that Assumption~\ref{assumption} implies
\begin{equation} \label{eq:ED}
\|\E[f_0 \circ \Phi_t]\|_{L^2} \leqc e^{-\nu^{\frac{N+1}{N+3}}t} \|f_0\|_{L^2}; 
\end{equation}
see, e.g., \cite{Villringer24,ABN22, wei2021diffusion, VictorJonathanKyle}. By the bound above and the continuous embedding $\ell_k^2 (W_y^{1,1})\hookrightarrow L^2$, we have
\begin{align}\label{eq:enhanced_IBP}
        \int_{\T^2}\E[f_{0}\circ \Phi_{t}(x,y)] \,   g(x,y) \, \dee x \, \dee y \leqc e^{-\nu^{\frac{N+1}{N+3}}t}\|f_0\|_{\ell_k^2 (W_y^{1,1})} \|g\|_{\ell_k^2 (W_y^{1,1})}.
\end{align}
The choice $p > \frac{N+1}{N+3}$ ensures that $e^{-\nu^{\frac{N+1}{N+3}}t} \leqc t^{-1}$ for $t \ge t_\nu$, which when combined with \eqref{eq:enhanced_IBP} implies \eqref{eq:IBP_goal}.

%It remains to show that the exponential decay in \eqref{eq:enhanced_IBP} is faster than $t^{-\frac{1}{N+1}}$. Let $M>0$ be a sufficiently large constant. We observe the chain of inequalities
%\begin{equation}\label{eq:decay}
  %  e^{-\nu^{\frac{N+1}{N+3}}t} = (\nu^{\frac{N+1}{N+3}}t)^{-M}(\nu^{\frac{N+1}{N+3}}t)^{M}e^{-\nu^{\frac{N+1}{N+3}}t} \leq C_M(\nu^{\frac{N+1}{N+3}}t)^{-M} \leq C_M t^{M(\frac{1}{p}\frac{N+1}{N+3}-1)} \lesssim t^{-\frac{1}{N+1}}.
%\end{equation}
%The final inequality holds by choosing $M$ large enough, which is permissible because our assumption $p>\frac{N+1}{N+3}$ guarantees that the exponent  $\frac{1}{p}\frac{N+1}{N+3}-1$ is strictly negative.

To complete the proof, it remains to prove \eqref{eq:IBP_goal} for $t\in [1,t_\nu]$. We first split the expectation between the good and bad noise realizations:
\begin{align}
  & \int_{\T^2}\E[f_{0}\circ \Phi_{t}(x,y)] \,  g(x,y) \, \dee x \, \dee y \\ 
  & \qquad =\int_{\Omega^c_\delta} \int_{\T^2}f_{0}\circ \Phi_{t}(x,y) \, g(x,y) \, \dee x \, \dee y \, \P(\dee \omega) +  \int_{\Omega_\delta} \int_{\T^2} := T_1 + T_2.\label{eq:estimate_IBP_prob}
\end{align}
To estimate $T_1$, we apply Lemma~\ref{lem:badnoise} and the Cauchy-Schwarz inequality to obtain
\begin{align}
    |T_1| &\le \P(\Omega_\delta^c) \|g\|_{L^2} \sup_{\omega \in \Omega_\delta^c} \|f_0 \circ \Phi_t\|_{L^2} \\ 
    &\leqc t^{-\frac{1}{N+1}} \|g\|_{L^2} \|f_0\|_{L^2} \leqc t^{-\frac{1}{N+1}} \|g\|_{\ell_k^2(W_y^{1,1})} \|f_0\|_{\ell_k^2(W_y^{1,1})}, \label{eq:ibp_estimate_bad}
\end{align}
where we have noted that the random flow map preserves Lebesgue measure for each $\omega \in \Omega$ and again applied the embedding $\ell_k^2 (W_y^{1,1})\hookrightarrow L^2$. 
For $T_2$, we rewrite the integral on the Fourier side using Parseval's identity and apply Lemma~\ref{lem:IBPkey}. First, from \eqref{eq:flowmap} and a direct computation we have 
\begin{align*}
    (f_{0}\circ\Phi_t)_k(y) = e^{ik\sqrt{2\nu}B_t}e^{-ik\varphi_t(y)} (f_{0})_k(t,y+\sqrt{2\nu}W_t),
\end{align*}
where here we use the notation introduced in Section~\ref{sec:results}. Define $F_k(t,y) = e^{ik\sqrt{2\nu}B_t} (f_{0})_k(t,y+\sqrt{2\nu}W_t)$ and note that $\|F_k(t,\cdot)\|_{W^{1,1}_y} \le C \|(f_0)_k\|_{W^{1,1}}$ for a constant $C > 0$ that does not depend on $k \in \Z$ or $t > 0$. Thus, by Parseval's theorem and Lemma~\ref{lem:IBPkey}, for any $\omega \in \Omega_\delta$ we have
\begin{align*}
&\left| \int_{\T^2}f_{0}\circ \Phi_{t}(x,y) g(x,y) \, \dee x \, \dee y \right| = \Bigg| \sum_{k \in \Z \setminus \{0\}} \int_{\T}e^{-ik\varphi_t(y)} F_k(t,y)g_k(t,y) \d y\Bigg| \\ 
& \qquad \qquad \leqc \sum_{k \in \Z \setminus \{0\}} t^{-\frac{1}{N+1}}\|F_k(t,\cdot)\|_{W^{1,1}_y} \|g_k\|_{W^{1,1}_y} \\ 
& \qquad \qquad \leqc \sum_{k \in \Z \setminus \{0\}}t^{-\frac{1}{N+1}}\|(f_0)_k\|_{W^{1,1}_y} \|g_k\|_{W^{1,1}_y} \le t^{-\frac{1}{N+1}}\|f_0\|_{\ell_k^2(W_{y}^{1,1})} \|g\|_{\ell_k^2 (W_y^{1,1})}.
\end{align*}
The above estimate implies that $|T_2| \leqc t^{-\frac{1}{N+1}}\|f_0\|_{\ell_k^2(W_{y}^{1,1})} \|g\|_{\ell_k^2 (W_y^{1,1})}$. Together with 
\eqref{eq:ibp_estimate_bad} and \eqref{eq:estimate_IBP_prob}, we have shown that 
$$ \int_{\T^2}\E[f_{0}\circ \Phi_{t}(x,y)] \,  g(x,y) \, \dee x \, \dee y \leqc t^{-\frac{1}{N+1}}\|f_0\|_{\ell_k^2(W_{y}^{1,1})} \|g\|_{\ell_k^2 (W_y^{1,1})}$$
for $1 \le t \le t_\nu$, which completes the proof.
\end{proof}

\section{Dynamical argument: Proof of Theorem~\ref{thm:main2}}
In this section, we prove Theorem~\ref{thm:main2}. By an approximation argument, it is sufficient to show that for any $f_0 \in C^\infty(\T^2)$ satisfying \eqref{eq:meanzero}, $g \in W^{1,\infty}(\T)$, and $x \in \T$ there holds
\begin{equation}\label{eq: estimate}
    \left|\int_\T \E [f_0\circ \Phi_t(x,y)] \, g(y) \, \d y\right| \leqc \langle t \rangle^{-\frac{1}{N+1}} \|f_0\|_{L^\infty_x (W^{1,\infty}_y)}\|g\|_{W^{1,\infty}(\T)}.
\end{equation}
Indeed, for general $f_0 \in L_x^\infty(W_y^{1,\infty})$ we may take a sequence of smooth functions $f_{0,n}$ with $f_{0,n} \to f_0$ in $L_x^\infty(W_y^{1,\infty})$ and use \eqref{eq: estimate} to deduce 
\begin{equation} \label{eq:passlimit}
    \|\E [f_{0,n} \circ \Phi_t]\|_{L_x^\infty(W_{y}^{-1,1})} \leqc \langle t \rangle^{-\frac{1}{N+1}} \|f_{0,n}\|_{L^\infty_x (W^{1,\infty}_y)}.
\end{equation}
Since $(x,y) \mapsto \Phi_t(x,y)$ is continuous for fixed $t\ge 0$ and $\omega \in \Omega$, we have 
$$ \|\E [f_{0,n} \circ \Phi_t] - \E [f_{0} \circ \Phi_t]\|_{L^\infty(\T^2)} \le \|f_{0,n} - f_0\|_{L^\infty(\T^2)}, $$
and hence $\E [f_{0,n} \circ \Phi_t] \to \E [f_{0} \circ \Phi_t]$ in $L^\infty(\T^2)$. We may therefore pass $n \to \infty$ in \eqref{eq:passlimit} to obtain the result of Theorem~\ref{thm:main2}. The restriction to smooth initial data is convenient simply so that $|f_0 \circ \Phi_t(x,y)| \le \|f_0\|_{L^\infty(\T^2)}$ holds pointwise over $(x,y) \in \T^2$.

For $t\geq t_\nu$, the bound \eqref{eq: estimate} follows as in the proof of Theorem~\ref{thm:main} by using the $L^\infty$ version of \eqref{eq:ED} established in \cite{VictorJonathanKyle} (see in particular \cite[Remark 2.2]{VictorJonathanKyle}). When $t \in [1,t_\nu]$, one can split the expectation in \eqref{eq: estimate} as in \eqref{eq:estimate_IBP_prob} and apply Lemma~\ref{lem:badnoise} to reduce the proof of \eqref{eq: estimate} to
\begin{equation} \label{eq:dynamicsgoal}
    \left|\int_\T f_0 \circ \Phi_t(x,y) \, g(y) \, \dee y \right| \leqc t^{-\frac{1}{N+1}} \|f_0\|_{L^\infty_x (W^{1,\infty}_y)}\|g\|_{W^{1,\infty}(\T)}, \qquad \forall \, \omega \in \Omega_\delta. 
\end{equation}
Our goal is thus to obtain the estimate \eqref{eq:dynamicsgoal} for $t \in [1,t_\nu]$ and smooth $f_0$. Throughout the remainder of the section, $t \in [1,t_\nu]$, $\delta$ is assumed small enough so that Lemma~\ref{key_lemma} applies, and $\omega \in \Omega_\delta$ is fixed.

Let $\{I_i\}_{i=1}^m$ be the collection of intervals guaranteed by Lemma~\ref{key_lemma} and set $J = \T \setminus \cup_{i=1}^m I_i$. Then, $J$ can be decomposed into a family of disjoint intervals $\{J_i\}_{i=1}^m$. The main step in the proof of Theorem~\ref{thm:main2} is estimating the integral in \eqref{eq:dynamicsgoal} when restricted to a given interval $J_i$. This estimate is provided by the following lemma.

\begin{lemma} \label{lem:dynamicskey}
    Let $\{J_i\}_{i=1}^m$ be as above. Then, for each $1 \le i \le m$ we have 
    \begin{equation} \label{eq:dynamicskey}
        \left|\int_{J_i} f_0 \circ \Phi_t(x,y) \, g(y) \, \dee y \right| \leqc t^{-\frac{1}{N+1}} \|f_0\|_{L^\infty_x (W^{1,\infty}_y)}\|g\|_{W^{1,\infty}(\T)}
    \end{equation}
\end{lemma}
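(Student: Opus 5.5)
The idea is to straighten the image of the vertical segment over $J_i$ by using the random phase as the new coordinate. In that coordinate each horizontal crossing of $\T^2$ has $u$-length $2\pi$, the mean-zero condition \eqref{eq:meanzero} cancels the leading term on each crossing, and all errors are controlled by the lower bound $|S_t| \gtrsim t^{\frac{1}{N+1}}$ on $J$ together with Lemma~\ref{key_lemma_2}.

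\textbf{Setup and change of variables.} Fix $\omega \in \Omega_\delta$ and $t \in [1,t_\nu]$. By \eqref{eq:flowmap},
$$\Phi_t(x,y) = \Pi\bigl(x_\ast - \varphi_t(y),\, y + \eta\bigr), \qquad x_\ast = x + \sqrt{2\nu}B_t, \quad \eta = \sqrt{2\nu}W_t,$$
where $x_\ast$ and $\eta$ do not depend on $y$. Since $J_i$ is an interval contained in $J$, Lemma~\ref{key_lemma} gives $|S_t| \ge c\,t^{\frac{1}{N+1}} > 0$ on $J_i$. As $S_t = \varphi_t'$ is continuous, it has a fixed sign there, so $\varphi_t$ is strictly monotone on $J_i$. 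Let $u = \varphi_t(y)$ range over the interval $U_i = \varphi_t(J_i)$, and let $Y : U_i \to J_i$ be the inverse map. Then
$$\int_{J_i} f_0\circ\Phi_t(x,y)\, g(y)\,\dee y = \int_{U_i} f_0\bigl(x_\ast - u,\, Y(u)+\eta\bigr)\, w(u)\,\dee u, \qquad w(u) := \frac{g(Y(u))}{|S_t(Y(u))|}.$$

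\textbf{Splitting into horizontal crossings.} Partition $U_i$ into consecutive intervals $P_j = [u_j, u_j + 2\pi)$, $j = 1,\dots,M$, plus one leftover interval $P_\ast$ of length less than $2\pi$. On $P_\ast$ the integrand is bounded by $\|f_0\|_{L^\infty}\sup|w|$, and $\sup |w| \lesssim t^{-\frac{1}{N+1}}\|g\|_{L^\infty}$, so this piece is $\lesssim t^{-\frac{1}{N+1}}\|f_0\|_{L^\infty}\|g\|_{L^\infty}$.

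On each $P_j$, compare the integrand with the frozen version $f_0(x_\ast - u, Y(u_j)+\eta)\, w(u_j)$. The frozen integral vanishes exactly: $u$ runs over a full period in the first argument and the second argument is fixed, so \eqref{eq:meanzero} applies. The remaining error on $P_j$ is bounded by
$$2\pi\,\|\pa_y f_0\|_{L^\infty}\,\bigl|Y(u_j+2\pi)-Y(u_j)\bigr|\,\sup|w| \;+\; 2\pi\,\|f_0\|_{L^\infty}\,\mathrm{TV}_{P_j}(w).$$

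\textbf{Summing over $j$.} The intervals $Y(P_j)$ are disjoint subintervals of $J_i$, so $\sum_j |Y(u_j+2\pi)-Y(u_j)| \le |J_i| \le 2\pi$. Hence the first terms sum to $\lesssim t^{-\frac{1}{N+1}}\|\pa_y f_0\|_{L^\infty}\|g\|_{L^\infty}$.

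Total variation is invariant under the monotone reparametrization $Y$, so
$$\sum_j \mathrm{TV}_{P_j}(w) \le \mathrm{TV}_{J_i}\bigl(g/|S_t|\bigr) \le \|g'\|_{L^1}\sup_{J}\frac{1}{|S_t|} + \|g\|_{L^\infty}\int_J \Bigl|\frac{\dee}{\dee y}\frac{1}{S_t}\Bigr|\,\dee y \lesssim t^{-\frac{1}{N+1}}\|g\|_{W^{1,\infty}},$$
where the last integral is bounded by Lemma~\ref{key_lemma_2}. Combining the three contributions gives \eqref{eq:dynamicskey}. The number $M$ of crossings, which can be of order $t$, never enters the bound.

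\textbf{Main obstacle.} The delicate point is exactly that $M$ may be as large as $\sim t$. A naive per-crossing error of size $t^{-\frac{1}{N+1}}$ would therefore sum to something far too large. The argument must be arranged so that every error telescopes. The $y$-drift across a crossing is summed through the disjointness of the $Y(P_j)$ in $J_i$. The weight error is summed through total variation, which is precisely the quantity controlled by Lemma~\ref{key_lemma_2}. Care is also needed in choosing where to freeze the second argument and the weight, so that the leading term on each full $2\pi$-period is killed exactly by \eqref{eq:meanzero}.
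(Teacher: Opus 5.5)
Your proof is correct and follows essentially the paper's route: decompose the image of the vertical segment into full horizontal crossings plus a short remainder, kill the frozen leading term on each crossing with \eqref{eq:meanzero}, and sum the errors using the lower bound on $|S_t|$ from Lemma~\ref{key_lemma} together with Lemma~\ref{key_lemma_2}. Your phase coordinate $u=\varphi_t(y)$ is the paper's parametrization of the curves $\gamma_j$ by the horizontal variable, since $J_t\,\dee\m_s = \dee x/|S_t|$. This lets you avoid the arc-length expansion and merge the paper's $T_{2,1}$ and $T_{2,3}$ into the single total variation of $g/|S_t|$.
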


We momentarily defer the proof Lemma~\ref{lem:dynamicskey} and use it to complete the proof of Theorem~\ref{thm:main2}.

\begin{proof}[Proof of Theorem~\ref{thm:main2}]
As discussed earlier, it suffices to prove \eqref{eq:dynamicsgoal}. We split the integral to obtain 
\begin{equation} \label{eq:1.2goal}
   \left|\int_{\T} f_0 \circ \Phi_{t}(x,y) \, g(y) \, \dee y\right| 
    \leq   \sum_{i=1}^m\left|\int_{I_i}  f_0 \circ \Phi_{t}(x,y) \, g(y) \, \dee y\right| + \left|\int_{J} f_0 \circ \Phi_{t}(x,y) \, g(y) \, \dee y\right|.
\end{equation}
For the first integral above, we apply Lemma~\ref{key_lemma} and 
$$ \sup_{y \in \T} |f_0 \circ \Phi_t(x,y)| \le \|f_0\|_{L^\infty(\T^2)}, $$
which holds since $f_0$ is smooth, to obtain
\begin{align}
     \sum_{i=1}^m\left|\int_{I_i}  f_0 \circ \Phi_{t}(x,y) \, g(y) \, \dee y\right| &\leq \sum_{i=1}^m|I_i|\|f_0\|_{L^\infty(\T^2)} \|g\|_{L^\infty(\T)} \\ 
     & \leqc \langle t \rangle^{-\frac{1}{N+1}}\|f_0\|_{L_x^\infty (W_{y}^{1,\infty})} \|g\|_{W^{1,\infty}(\T)}. \label{eq:dynamical_bad}
\end{align}
To estimate the second term in \eqref{eq:1.2goal}, we apply Lemma~\ref{lem:dynamicskey} to deduce
\begin{align}
    \left|\int_{J} f_0 \circ \Phi_{t}(x,y) g(x,y) \dee y\right| &\leq \sum_{i=1}^m \left|\int_{J_i} f_0 \circ \Phi_{t}(x,y) g(x,y) \dee y\right| \\
    & \leqc t^{-\frac{1}{N+1}} \|f_0\|_{L^\infty_x (W^{1,\infty}_y)}\|g\|_{W^{1,\infty}(\T)}. \label{eq:dynamical_good}
\end{align}
Using \eqref{eq:dynamical_bad} and \eqref{eq:dynamical_good} yields \eqref{eq:dynamicsgoal}, completing the proof.
\end{proof}

The remainder of this section is devoted to the proof of Lemma~\ref{lem:dynamicskey}. We fix a given interval $J_i \subseteq \T$ and denote it by $I$. For the sake of convenient notation, we extend the test function $g \in W^{1,\infty}(\T)$ to a function on $\T^2$ that is constant in $x$. We denote the extended function by $g(x,y)$.

The proof of Lemma~\ref{lem:dynamicskey} is based on the change of variable
\begin{equation}\label{eq: pushforward}
    \left|\int_{I} f_0 \circ \Phi_{t}(x,y) \, g(x,y) \, \dee y\right|  =\left| \int_{\Phi_{t}(I)} f_0 (x,y) \, g\circ \Phi^{-1}_{t}(x,y) J_t(y) \, \dee \m_s \right|.
\end{equation}
Here, $\m_s$ denotes the arc length measure, and $J_t(y)$ is the Jacobian factor, defined by
\begin{equation}\label{eq: stretching_factor}
   J_t(y):= \frac{1}{\sqrt{1+S_t^2(y-\sqrt{2\nu}W_t)}}.
\end{equation}
As discussed in Section \ref{sec:ideas}, we will establish estimates of the type in \eqref{eq:gammamean}. To this end, we first describe the geometry of the image curve $\Phi_t(I)$.

\begin{lemma}\label{lemma:gamma}
    The image curve $\Phi_{t}(I)$ can be decomposed as
    \[
    \Phi_{t}(I) = \left(\bigcup_{j=1}^M \gamma_j\right) \cup R,
    \]
    where each $\gamma_j$ is a smooth curve that cross $\T$ horizontally exactly once and $R$ is a remainder satisfying $m_s(R) \leqc 1$. Precisely, for each $1\leq j\leq M$, there exists a smooth, strictly monotone function $h_j(x): \T \to \T$ such that $\gamma_j$ can be realized as the graph
    \[
     \gamma_j = \big\{ (x, h_j(x)) \in \T^2 : x \in \T \big\}.
    \]
 Moreover,
  \begin{equation}\label{eq: h}
    h_j'(x) = -\frac{1}{S_t(h_j(x)-\sqrt{2\nu}W_t)} \ \ \ \text{and} \ \ \ |h_j'(x)|\leq t^{-\frac{1}{N+1}}.
\end{equation}
\end{lemma}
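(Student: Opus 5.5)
The idea is to parametrize the image curve by the initial height and read off its geometry from $S_t$. For fixed $x$ and $\omega$, \eqref{eq:flowmap} gives
\[
\Phi_t(x,y) = \Pi\big(X(y), Y(y)\big), \qquad X(y) = x+\sqrt{2\nu}B_t-\varphi_t(y), \qquad Y(y) = y+\sqrt{2\nu}W_t .
\]
As $y$ ranges over $I$, the lifted curve $y\mapsto (X(y),Y(y))\in\R^2$ has tangent $(-S_t(y),1)$. The vertical coordinate is a translate of $y$, so $y = Y-\sqrt{2\nu}W_t$. On $I\subseteq J$, Lemma~\ref{key_lemma} gives $|S_t(y)| > c\,t^{\frac{1}{N+1}}$. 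In particular $S_t$ has a fixed sign on the interval $I$, so $X$ is strictly monotone on $I$. The lifted image is therefore the graph over an $x$-interval of a function $\tilde h$ with
\[
\tilde h'(X) = \frac{dY}{dX} = -\frac{1}{S_t(\tilde h(X)-\sqrt{2\nu}W_t)} .
\]
This is the formula in \eqref{eq: h}. The slope bound follows from the lower bound on $|S_t|$. The statement writes $|h_j'|\le t^{-1/(N+1)}$ with constant one; either shrink $c$ or read the bound up to a constant, which is all that is used later. Smoothness and strict monotonicity of $\tilde h$ follow from the inverse function theorem, since $S_t$ is smooth in $y$ and nonvanishing on $I$.

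Next, cut the $x$-range $X(I)$, an interval of length $L=\int_I|S_t|\,\dee y$, into consecutive pieces of length exactly $2\pi$, plus one leftover piece of length less than $2\pi$. Each full piece projects under $\Pi$ to a curve $\gamma_j$ that is a graph $x\mapsto h_j(x)$ over all of $\T$, crossing $\T^2$ horizontally exactly once, and $h_j$ inherits the derivative formula and monotonicity. The leftover piece is $R$.

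It remains to show $\m_s(R)\lesssim 1$. Over an $x$-interval of length at most $2\pi$ with slope at most $t^{-1/(N+1)}\le C$, the arclength is at most $2\pi\sqrt{1+C^2}\lesssim 1$. If we want $R$ to sit symmetrically, we may put half of the leftover at each end; this does not affect the bound.

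The main subtlety is the periodic bookkeeping. The graph description must be made on the lift in $\R^2$ before projecting, so that the monotone pieces are genuinely graphs over $\T$ after projection. We also need $S_t$ to be evaluated at the correct preimage height, via the shift by $\sqrt{2\nu}W_t$ and periodicity of $S_t$. Finally, if $I$ wraps around $\T$, we work on a lift of $I$ to an interval in $\R$; the fixed sign of $S_t$ on the connected set $I$ is what rules out folds.
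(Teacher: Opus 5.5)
Your proposal is correct and is essentially the paper's proof: parametrize by the initial height, use the fixed sign and lower bound of $S_t$ on $I$ from Lemma~\ref{key_lemma} to invert $X$, and cut $X(I)$ into full horizontal periods with the leftover as $R$. You rightly use pieces of length $2\pi$ where the paper writes ``length one'', and you supply the arclength bound for $R$ and the lift for a wrapped $I$, both of which the paper leaves implicit.

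One small slip: shrinking $c$ weakens the lower bound $|S_t|>c\,t^{\frac{1}{N+1}}$, so it cannot give the slope bound with constant one. The bound must be read as $|h_j'|\lesssim t^{-\frac{1}{N+1}}$, which is all that is used later.
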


\begin{proof}
Let the horizontal coordinate of $I$ be given by $x_0 \in \T$ and consider the parameterized curve 
\begin{equation}
    \gamma: I \to \R^2, \quad \gamma(y) = \Big(x_0+\sqrt{2\nu}B_t - \int_0^tb(y+\sqrt{2\nu}W_s) \ \d s,y+\sqrt{2\nu}W_t\Big):= (X(y),Y(y)).
\end{equation}
Observe that $X'(y) = -S_t(y)$, which by Lemma~\ref{key_lemma} satisfies $|X'(y)| \gtrsim t^{\frac{1}{N+1}}$ and has a fixed sign for $y \in I$, implying that $X(y)$ is strictly monotone. Then, by the Inverse Function Theorem, $X$ has a smooth inverse function $X^{-1}: X(I) \to I$. We define
\[
h(x):X(I)\to \R, \quad h(x) = X^{-1}(x) + \sqrt{2\nu}W_t.
\]
Then, $\Phi_t(I) = \Pi(\{(x,h(x)): x \in X(I)\})$, where $\Pi: \R^2 \to \T^2$ denotes the canonical projection. We obtain the curves $\gamma_j$ by decomposing the interval $X(I) \subseteq \R$ into pieces of length one, taking the portion of the graph of $h$ over each piece, and then projecting back to $\T^2$. The equality in \eqref{eq: h} is immediate from the construction and the upper bound follows from Lemma~\ref{key_lemma}, since $h_j(x) - \sqrt{2\nu} W_t \in I$.

\end{proof}

With Lemma~\ref{lemma:gamma} in hand, we are now ready to prove Lemma~\ref{lem:dynamicskey}.

\begin{proof}[Proof of Lemma~\ref{lem:dynamicskey}]

Recall that we extended the test function $g$ in the statement of the lemma to a function on $\T^2$ that is independent of $x$. Our goal is then to prove \eqref{eq:dynamicskey} with the $W^{1,\infty}$ norm on the right-hand side taken over $\T^2$. We begin by applying Lemma~\ref{lemma:gamma} to decompose \eqref{eq: pushforward}:
\begin{align}\label{eq:estimate_main}
 \quad \ \left|\int_{\Phi_{t}(I)} f_0 (x,y) \, g\circ \Phi^{-1}_{t}(x,y) J_t(y) \, \dee \m_s\right| \leq 
\left|\int_{R} \right|  + \left|\sum_{j=1}^M \int_{\gamma_j}  \right| := T_1 + T_2.\numberthis
\end{align}
For $(x,y)\in \Phi_t(I)$, we have $y-\sqrt{2\nu}W_t \in I$, and so $|S_t(y-\sqrt{2\nu}W_t)|\gtrsim t^{\frac{1}{N+1}}$ by Lemma~\ref{key_lemma}. Therefore, $J_t(y) \lesssim t^{-\frac{1}{N+1}}$ and the remainder term satisfies
\begin{equation}\label{eq:remainder}
      T_1 \lesssim t^{-\frac{1}{N+1}} m_s(R) \|f_0\|_{L^\infty(\T^2)}\|g\|_{L^\infty(\T^2)} \leqc t^{-\frac{1}{N+1}}\|f_0\|_{L_x^\infty(W_y^{1,\infty})} \|g\|_{W^{1,\infty}(\T^2)}.
\end{equation}

To complete the proof, it remains to show that 
\begin{equation} \label{eq:T2goal}
    T_2 \leqc t^{-\frac{1}{N+1}}\|f_0\|_{L_x^\infty(W_y^{1,\infty})} \|g\|_{W^{1,\infty}(\T^2)}.
\end{equation}
For a fixed $\gamma_j$, we have
\begin{align*}
   & \quad \ \left|\int_{\gamma_j} f_0 (x,y) \, g\circ \Phi^{-1}_{t}(x,y) J_t(y) \, \dee \m_s \right| \leq \left| \int_{\gamma_j} f_0 (x,y) \Big(g\circ \Phi^{-1}_{t}(x,y)-g_j\Big) J_t(y) \ \dee \m_s \right| \\
    &+ \left|\int_{\gamma_j}f_0(x,y) \, g_j \, J_j \, \dee \m_s\right| + \left| \int_{\gamma_j} f_{0} (x,y) \, g_j \Big(J_t(y)-J_j\Big) \, \dee \m_s \right| :=|T_{2,1}| + |T_{2,2}| + |T_{2,3}|,
\end{align*} 
where here we have used the notation $g_j:= \fint_{\gamma_j} g \circ \Phi^{-1}_{t} \,  \dee\m_s$ and $J_j:=\fint_{\gamma_j}J_t(y) \, \dee \m_s$.
We will estimate each of the three terms above and obtain \eqref{eq:T2goal} by summing the results over $1 \le j \le M$. 

\textbf{Estimate of $T_{2,1}$}: 
We begin by observing that $g\circ \Phi_t^{-1}$ has small deviation from its mean on $\gamma_j$. For any $z \in \gamma_j$, we have
    \begin{align*}
        g\circ \Phi^{-1}_{t}(z) - g_j &= \fint_{\gamma_j} \Big(g\circ \Phi^{-1}_{t}(z)-g\circ \Phi^{-1}_{t}(z')\Big) \, \dee \m_s(z') \\
        &\leq \sup_{z_1,z_2 \in \Phi^{-1}_t(\gamma_j)} |g(z_1)-g(z_2)| \leq \|g\|_{W^{1,\infty}} \ |\Phi^{-1}_t (\gamma_j)|.
    \end{align*}
   This yields
   \begin{equation}\label{eq:T_21}
         |T_{2,1}|\lesssim \|f_0\|_{L^{\infty}} \|g\|_{W^{1,\infty}}\ |\Phi^{-1}_t (\gamma_j)| \int_{\gamma_j} J_t\ \dee \m_s \lesssim  t^{-\frac{1}{N+1}} \|f_0\|_{L^{\infty}_x(W^{1,\infty}_y)} \|g\|_{W^{1,\infty}} \int_{\gamma_j} J_t\ \dee \m_s. 
   \end{equation}
The last inequality follows from the estimate $|\Phi_t^{-1}(\gamma_j)|\lesssim t^{-\frac{1}{N+1}}$. To see this, observe that
    \begin{equation}\label{eq:stretch}
    |\Phi_t^{-1}(\gamma_j)| = \int_{\Phi_t^{-1}(\gamma_j)} \d y = \int_{\gamma_j} J_t \ \d
        \m_s \leq \m_s(\gamma_j) t^{-\frac{1}{N+1}}\lesssim t^{-\frac{1}{N+1}}. 
    \end{equation}
    Here we have used the fact that $J_t(y)\lesssim t^{-\frac{1}{N+1}}$ whenever $(x,y) \in \Phi_t(I)$.
    
\textbf{Estimate of $T_{2,2}$}: We leverage the explicit parameterization of $\gamma_j$ and the condition that $f$ has zero mean along $x$. It is clear that
    \begin{align*}
        |T_{2,2}|= \frac{|g_j| }{\m_s(\gamma_j)}\int_{\gamma_j} J_t\ \dee \m_s\left|\int_{\gamma_j} f_0(x,y) \ \dee \m_s \right|\lesssim \|g\|_{W^{1,\infty}} \int_{\gamma_j} J_t\ \dee \m_s  \left|\int_{\gamma_j} f_0(x,y) \ \dee \m_s\right|.
    \end{align*}
To estimate the last integral on the right-hand side above, we first use the parametrization of $\gamma_j$, Taylor's theorem, and \eqref{eq: h} to obtain
\begin{align*}
      \int_{\gamma_j} f_0(x,y) \, \dee \m_s &= \int_0^{1} f_0(x,h_j(x)) \sqrt{1+|h'_j(x)|^2} \, \dee x =\int_0^1 f_0(x,h_j(x))\, \dee x + O\big(t^{-\frac{2}{N+1}}\big)\|f_0\|_{L^\infty}.
\end{align*}
  Since $f_0$ satisfies \eqref{eq:meanzero}, we have
     \begin{align*}
         \left|\int_0^{1} f_0(x,h_j(x))\,  \dee x\right| &= \left|\int_0^{1} \Big(f_0(x,h_j(x))-f_0(x,h_j(0))\Big) \, \dee x\right|\\
          & \lesssim \|f_0\|_{L^{\infty}_x(W^{1,\infty}_y)}\ \sup_{x\in \T}|h_j'(x)| \lesssim t^{-\frac{1}{N+1}} \|f_0\|_{L^{\infty}_x(W^{1,\infty}_y)},
     \end{align*}
    where in the last line we again applied \eqref{eq: h}. Combining the estimates above, we have shown that
    \begin{equation}\label{eq:T22}
         |T_{2,2}| \lesssim  t^{-\frac{1}{N+1}}   \|f_0\|_{L^{\infty}_x(W^{1,\infty}_y)}\|g\|_{W^{1,\infty}}\int_{\gamma_j} J_t\ \dee \m_s.
    \end{equation}
   
\textbf{Estimate of $T_{2,3}$}: Let $r(x) = (1,h_j'(x))$ denote the tangent vector to $\gamma_j$. Observe that
     \begin{align}
          |T_{2,3}| =  \left|\int_{\gamma_j} f_0(x,y)\, g_j\, \Big(J_t(y)-J_j\Big) \, \dee \m_s\right|  &\le \|f_0\|_{L^{\infty}_x(W^{1,\infty}_y)}\|g\|_{W^{1,\infty}} \int_{\gamma_i}|J_t(y)-J_j| \ \dee \m_s \\
          &\leqc  \|f_0\|_{L^{\infty}_x(W^{1,\infty}_y)}\|g\|_{W^{1,\infty}} \int_0^1 |\grad J_t\big(h_j(x)\big)\cdot r(x)| \ \dee x, \label{eq:T_22}
     \end{align}
     where the second inequality follows by applying the fundamental theorem of line integrals to bound $|J_t(y) - J_j|$. We now estimate the integral in \eqref{eq:T_22}, which for convenience we denote by $T_{3}$. A direct computation shows that
     \begin{align*}
     (\grad J_t)(h_j(x)) &= \Big(0, \big(1+S_t^2(H_j(x))\big)^{-3/2}S_t\big(H_j(x)\big)S_t'\big(H_j(x)\big)\Big) \\ 
     & = J_t(h_j(x)) \left(0,\big(1+S_t^2(H_j(x))\big)^{-1}S_t\big(H_j(x)\big)S_t'\big(H_j(x)\big)\right),
     \end{align*}
     where $H_j(x) = h_j(x) - \sqrt{2\nu} W_t$. Since $h_j'(x) = -(S_t'(H_j(x)))^{-1}$ by \eqref{eq: h}, we then have
      \begin{align*}
         T_{3} = \int_0^1 \Bigg|\frac{S_t'\big(H_j(x)\big)}{1+S_t^2(H_j(x))}\Bigg| J_t(h_j(x)) \, \dee x &\leq \int_0^1  \Bigg|\frac{S_t'(H_j(x))}{1+S_t^2(H_j(x))}\Bigg| J_t(h_j(x)) \sqrt{1+|h'_j(x)|^2} \,  \dee x \\&=\int_{\gamma_j} \Bigg|\frac{S_t'(y-\sqrt{2\nu}W_t)}{1+S_t^2(y-\sqrt{2\nu}W_t)}\Bigg| J_t(y)  \, \dee \m_s.
      \end{align*}
Putting the bound above into \eqref{eq:T_22} and reversing the change of variables yields
    \begin{align}
        |T_{2,3}|&\lesssim \|f_0\|_{L^{\infty}_x(W^{1,\infty}_y)}\|g\|_{W^{1,\infty}} \int_{\gamma_j} \Bigg|\frac{S_t'(y-\sqrt{2\nu}W_t)}{1+S_t^2(y-\sqrt{2\nu}W_t)}\Bigg| J_t(y)  \, \dee \m_s \  \\ &= \|f_0\|_{L^{\infty}_x(W^{1,\infty}_y)}\|g\|_{W^{1,\infty}} \int_{\Phi^{-1}_t(\gamma_j)}\bigg| \frac{S_t'(y)}{1+S_t^2(y)} \bigg| \, \dee y. \label{eq:T_23}
    \end{align}

\textbf{Conclusion}. Summing \eqref{eq:T_23} over $j$ and applying Lemma~\ref{key_lemma_2} gives
 \begin{align*}
      \sum_{j=1}^M |T_{2,3}| &\lesssim \|f_0\|_{L^{\infty}_x(W^{1,\infty}_y)}\|g\|_{W^{1,\infty}} \sum_{j=1}^M \int_{\Phi^{-1}_t(\gamma_j)} \bigg|\frac{\dee}{\dee y}\left(\frac{1}{S_t(y)}\right)\bigg| \,  \dee y \\
      &= \|f_0\|_{L^{\infty}_x(W^{1,\infty}_y)}\|g\|_{W^{1,\infty}} \int_{I} \bigg|\frac{\dee}{\dee y}\left(\frac{1}{S_t(y)}\right)\bigg| \,  \dee y \lesssim t^{-\frac{1}{N+1}} \|f_0\|_{L^{\infty}_x(W^{1,\infty}_y)}\|g\|_{W^{1,\infty}} .
 \end{align*}
 Similarly for $T_{2,1}$ and $T_{2,2}$, we sum \eqref{eq:T_21} and \eqref{eq:T22} over $j$ to obtain
 \begin{align*}
     \sum_{j=1}^M (|T_{2,1}|+|T_{2,2}|) &\lesssim t^{-\frac{1}{N+1}} \|f_0\|_{L^{\infty}_x(W^{1,\infty}_y)}\|g\|_{W^{1,\infty}}  \sum_{j=1}^M\int_{\Phi^{-1}_t(\gamma_j)} J_t \, \d \m_s \\
     &\lesssim t^{-\frac{1}{N+1}} \|f_0\|_{L^{\infty}_x(W^{1,\infty}_y)}\|g\|_{W^{1,\infty}} \int_{I} \d y \lesssim t^{-\frac{1}{N+1}} \|f_0\|_{L^{\infty}_x(W^{1,\infty}_y)}\|g\|_{W^{1,\infty}} .
 \end{align*}
Then, combining these estimates, we establish \eqref{eq:T2goal}, which proves Lemma~\ref{lem:dynamicskey}.
\end{proof}

\bibliographystyle{abbrv}    
\bibliography{uniform_mixing}
\end{document}